
\documentclass[12pt, a4paper]{amsart}
\usepackage{amssymb,amsmath,mathrsfs}
\usepackage[colorlinks=true,urlcolor=blue,
citecolor=red,linkcolor=blue,linktocpage,pdfpagelabels,bookmarksnumbered,bookmarksopen]{hyperref}
\usepackage[english]{babel}



\newcommand{\N}{{\mathbb N}}

\newcommand{\R}{{\mathbb R}}

\newcommand{\rn}{{\mathbb{R}^N}}
\newcommand{\be}{\begin{equation}}
\newcommand{\ee}{\end{equation}}

\newcommand{\ov}[1]{\overline{#1}}



\numberwithin{equation}{section}
\newtheorem{theorem}{Theorem}
\newtheorem{proposition}[theorem]{Proposition}

\newtheorem{lemma}[theorem]{Lemma}

\theoremstyle{definition}
\newtheorem{remark}{Remark}

\newtheorem{open}{Open Problem}

\newcommand{\brm}{\begin{remark}\rm}
\newcommand{\erm}{\end{remark}}
\newcommand{\brms}{\begin{remark}\rm}
\newcommand{\erms}{\end{remark}}
\newcommand{\bte}{\begin{theorem}}
\newcommand{\ete}{\end{theorem}}
\newcommand{\bpr}{\begin{proposition}}
\newcommand{\epr}{\end{proposition}}
\newcommand{\ble}{\begin{lemma}}
\newcommand{\ele}{\end{lemma}}
\newcommand{\beq}{\begin{equation}}
\newcommand{\eeq}{\end{equation}}
\newcommand{\bdm}{\begin{displaymath}}
\newcommand{\edm}{\end{displaymath}}
\numberwithin{equation}{section}

\newcommand{\bos}{\begin{remark}\rm}
\newcommand{\eos}{\end{remark}}

\newcommand{\ben}{\begin{enumerate}}
\newcommand{\een}{\end{enumerate}}

\title[A problem with quadratic growth in the gradient]
{Existence and multiplicity for elliptic problems with quadratic growth in the gradient}

\author[Louis Jeanjean]{Louis Jeanjean}
\address{Louis Jeanjean
\newline\indent
Laboratoire de Math\'ematiques (UMR 6623)
\newline\indent
Universit\'{e} de Franche-Comt\'{e}
\newline\indent
16, Route de Gray 25030 Besan\c{c}on Cedex, France}
\email{louis.jeanjean@univ-fcomte.fr}


\author[Boyan Sirakov]{Boyan Sirakov}
\address{Boyan Sirakov
\newline\indent
 Pontif\'{\i}cia Universidade Cat\'olica do Rio de Janeiro
 \newline\indent
Departamento de Matem\'atica,
\newline\indent
Rua Marqu\^es de S\~ao Vicente 225, G\'avea
\newline\indent
Rio de Janeiro - RJ,
CEP 22451-900,
\newline\indent Brazil }
\email{bsirakov@mat.puc-rio.br}



\begin{document}
\subjclass[2000]{35J25, 35J62}

\keywords{Elliptic equation, natural growth, quadratic growth in the gradient, non-coercive,
variational methods, sub- and super-solutions}

\begin{abstract}
We show that a class of divergence-form elliptic problems with quadratic growth in the gradient and non-coercive zero order terms are solvable, under essentially optimal hypotheses on the coefficients in the equation. In addition, we prove that the solutions are in general not unique. The case where the zero order term has the opposite sign was already
intensively studied and the uniqueness is the rule.
\end{abstract}
\maketitle



\section{Introduction}

Boundary value problems for elliptic equations like
\begin{equation}\label{0.2}
-\mathrm{div}(a(x,u, \nabla u)) = B(x,u,\nabla u) + f(x), \quad x \in \Omega\subset\rn,
\end{equation}
where $-div(a(x,\cdot,\nabla \cdot))$ is a Leray-Lions operator on
some Sobolev space, have been one of the central problems in the
theory of elliptic PDE in divergence form. This paper is a
contribution to this study for the widely explored case when the
nonlinear term  $B(x,u,\xi)$ has ``natural growth" in the unknown
function, that is, grows linearly in $u$ and quadratically in
$\xi\in \rn$. The model case for our study is
\begin{equation}\label{model}
a(x,u, \xi) = A(x)\xi, \qquad B(x,u,\xi)  = c_0(x) u + \mu(x)|\xi|^2,
\end{equation}
where $A$ is a positive bounded matrix, $\mu\in L^\infty(\Omega)$,
and $c_0, f$ belong to suitably chosen Lebesgue spaces.

This type of problems have generated a considerable literature.
 Let us mention here~\cite{BoMuPu2, BoMuPu3, DaGiPu, FeMu2, GrMuPo, BaBlGeKo,BaMu,BaPo, AbDaPe, AbBi, AbBi2} as reference
papers on this subject, most closely related to the problem we
consider. In these works
the existence, uniqueness or multiplicity of solutions of
\eqref{0.2} is established under various conditions on $a$,  $B$
and $f$, which will be discussed below.

Most of the works quoted above, when reduced to \eqref{model}, assume that the coefficient $c_0$ is nonpositive, that is, the equation is {\it coercive} or {\it proper}. The only exception to this rule is \cite{AbBi2}, in which the particular case $c_0=f\gneqq0$ in the model problem \eqref{0.2}-\eqref{model} was mentioned; in the next section we will give a more detailed account on the results which appeared prior to this paper. Here we consider the general problem \eqref{Generalf} below, with
non-coercive dependence in the unknown function $u$. Specifically, we are going to see that, {\it when $c_0$ is
positive and sufficiently close to zero,  the same type of
existence result as in the case $c_0\le0$ can be obtained, but the
bounded solutions are not unique}.

The paper is organized as follows. The next section contains our hypotheses and main results,  and situates them with respect to previous works. A brief overview of the proofs is given in Section \ref{diff}, while the proofs themselves can be found in Sections \ref{link}--\ref{nonconstant}. We conclude with some final remarks in Section \ref{final}, where we discuss possible extensions and open problems.

\section{Main Results}

In this section we state our main results. We study the equation
\begin{equation}\label{Generalf}
 - \mathrm{div}(A(x) \nabla u) =  H(x,u,\nabla u), \quad u \in H^1_0(\Omega),
\end{equation}
 where    $\Omega \subset \R^N, N \geq 3$ is a bounded domain in
$\R^N$,
$$
\mathbf{(H1)}\;\left\{ \begin{array}{c} A \in L^{\infty}(\Omega)^{N \times N},\; \Lambda I\ge A \geq \lambda I,\;\mbox{ for }\mbox{ some } \Lambda \ge \lambda>0,\mbox{ and }\\ \\
    |H(x,s,\xi)|\le c_0(x)|s|+ \mu|\xi|^2 + f(x),\\ \\
\mbox{ for }\mbox{ some }\mu\in \mathbb{R}^+\,,\; c_0, f\in L^p(\Omega)\mbox{
with } p
>\frac{N}{2}. \end{array}
\right.
$$
In the sequel we denote with $C_N$ the optimal Sobolev constant, defined in \eqref{sobo} below.
We have the following main existence result.
\begin{theorem} \label{theoexist}
\ \ Assume that (H1)  holds and
\begin{equation}\label{H1}
 \mu\|f\|_{L^{\frac{N}{2}}(\Omega)} <{C_N}.
\end{equation}
 Then there
exists a constant $\ov{c}>0$ depending on $N$, $p$,  $|\Omega|$,
$\mu$, $\mu\|f\|_{L^p(\Omega)}$, such that if
$$ \|c_0\|_{L^p(\Omega)}< \ov{c}$$
then \eqref{Generalf} admits a bounded solution.
\end{theorem}

Next, we  show that introducing a non-coercive zero
order term in \eqref{Generalf} induces {\it non-uniqueness} of the
bounded solutions of this equation, in the extremal cases of
the structural hypothesis (H1) above. In other words, we prove a
multiplicity result for the equation
\begin{equation} \label{problem0}
 -\Delta u = c_0(x)u + \mu |\nabla u|^2 + f(x), \qquad  u \in
H^1_0(\Omega),
\end{equation}
where $\mu\in \mathbb{R}$, $c_0, f\in L^p(\Omega)$.
\begin{theorem} \label{theorem1} Assume that $$\mu\not=0\quad\mbox{  and }\quad c_0\gneqq0\;\mbox{ in } \Omega.$$
If
\begin{equation}\label{H2}\|[\mu f]^+\|_{L^{\frac{N}{2}}(\Omega)}
<{C_N},
\end{equation}
and
$$
\max\{\|c_0\|_{L^p(\Omega)}\,,\,\|[\mu f]^-\|_{L^p(\Omega)}\}<
\ov{c}\,,
$$
where $\ov{c}>0$ depends only  on $N$, $p$, $|\Omega|$, $|\mu|$,
$\|[\mu f]^+\|_{L^p(\Omega)}$, then \eqref{problem0} admits at
least  $\mathrm{two}$ $\mathrm{bounded}$ solutions.
\end{theorem}

\begin{remark} It is easy to check that the hypotheses in the above theorems are necessary, in the sense that \eqref{problem0} has no bounded solutions if $c_0=0$ and $\mu$ is large, or if $c_0=0$ and $f$ is large, or if $\mu = 0$ and $c_0$ is large; also if $c_0=0$ or $\mu=0$ the solution given by Theorem \ref{theoexist} is unique. See for instance the last remarks in Section 3 of \cite{Si}, pages 598-599 in that paper.
\end{remark}
\begin{remark} Note that in Theorem \ref{theorem1} there is no restriction on the sign of the source term $f(x)$.
\end{remark}
\begin{remark} A slightly more general version of Theorem \ref{theorem1} will be given in Section \ref{nonconstant} (see also the remarks in Section \ref{final}).
\end{remark}

Next we review the existence and uniqueness
results which appeared prior to our work. Because of the very
large literature we restrict ourselves to  works which encompass the model
case \eqref{model} (the reader may consult the references in the
papers quoted below for various related problems).

We begin with references concerning Theorem \ref{theoexist}. A weaker version
of this result appeared already in Kazdan and Kramer \cite{KaKr}, where
equations in non-divergence form are studied. Later, in \cite{BoMuPu2, BoMuPu3},
Boccardo, Murat and Puel showed that the sub- and super-solution method applies
to general divergence-form equations with quadratic growth in the gradient, and
proved existence of bounded solutions of such equations under a hypothesis of
strict coercivity in $u$, that is, $c_0(x)\le -\alpha_0<0$ in \eqref{model}.
For results on strictly coercive equations we refer also to dall'Aglio,
Giachetti and Puel \cite{DaGiPu}. Next, the equation \eqref{Generalf} in the weakly coercive case  ($c_0\equiv 0$) was studied by Maderna, Pagani and Salsa \cite{MaPaSa}, and
Ferone and Murat \cite{FeMu1, FeMu2}.  In that case it turns out that existence
can be proved only under a smallness hypothesis on the source term $f$, as in
\eqref{H1}. Theorem \ref{theoexist} reduces to these results when $c_0=0$, and
extends them to non-coercive zero-order terms.

Let us give some more context on coercive problems. Further existence results with
weaker assumptions of regularity on the coefficients can be found in Grenon,
Murat and Porretta \cite{GrMuPo}. Uniqueness results in
natural spaces associated to the coercive problem were obtained by Barles and Murat
\cite{BaMu}, Barles, Blanc, Georgelin, and Kobylanski \cite{BaBlGeKo}, Barles
and Porretta \cite{BaPo}. We also refer to the recent works by Abdellaoui,
dall'Aglio and Peral \cite{AbDaPe}, and Abdel Hamid and Bidaut-V\'eron
\cite{AbBi} for a deep study of \eqref{problem0} in the particular case $c_0= 0$, $\mu=1$, and
$f\ge 0$. They show that in this case the problem
\eqref{problem0} has infinitely many solutions, of which only one is such that
$e^u-1\in H^1_0(\Omega)$. For results on other classes of equations of type
\eqref{0.2}, with $H$ being for instance in the form $H(x,s,\xi)=
\beta(s)|\xi|^2$ for some real function $\beta$, we refer to Boccardo,
Gallou\"et, and Murat \cite{BoGaMu},  as well as to \cite{MaPaSa},
\cite{AbDaPe}, \cite{AbBi,AbBi2}. We note that in many of these
papers equations involving quasilinear operators modeled on the $p$-Laplacian
are also studied. Finally, the second author \cite{Si} recently obtained
existence and uniqueness  results for fully nonlinear equations in
non-divergence form with quadratic dependence in the gradient, in which case
the adapted weak notion of solution is the viscosity one (see \cite{Si} for
references on these types of problems).
The idea of
our study originated from that paper.

As far as Theorem \ref{theorem1} is concerned,
the fact that in problems with natural growth in the gradient the
presence of a non-coercive zero-order term may lead to
non-uniqueness of bounded solutions was observed only very
recently in \cite{Si}, for the equation \eqref{problem0} with
$f=0$. Subsequently the case when $f\equiv c_0\gneqq0$ was
considered in the work by Abdel Hamid and Bidaut-V\'eron
\cite{AbBi2} (their model equation is $-\Delta_p u = |\nabla u|^p
+ \lambda f(x)(1+u)^b$, $b\ge p-1$). Theorem \ref{theorem1} is valid for arbitrary source term $f$, which in particular may not be positive, and thus shows the multiplicity result is independent of the source term
-- as long as it has a small norm, of course, otherwise solutions
may not exist.

To summarize, Theorem \ref{theoexist} is an essentially optimal, with respect to the coefficients, result on existence of bounded solutions of \eqref{Generalf},  for equations in divergence form with possibly non-coercive zero-order terms; while Theorem \ref{theorem1} shows uniqueness of bounded solutions is lost in the presence of non-coercive zero-order terms, at least  in the model cases. We do not know whether a more general non-uniqueness result is valid (see Section \ref{final}).

In the next section we give more details on the underlying ideas in our approach, and discuss the difference between coercive and non-coercive problems.
\vspace{1cm}
\bigskip

\begin{center}\textbf{Notation.}\end{center}
\begin{enumerate}{\small
\item We denote by $X$ the space $H^1_0(\Omega)$ equipped with the Poincar\'{e}
norm $|||u|||:=\int_\Omega |\nabla u|^2$, and by $X^{-1}$ its dual.
\item For $v \in L^1(\Omega)$ we define $v^+= max(v,0)$ and $v^- =
max(-v,0)$.
\item  The norm $(\int_{\Omega}|u|^pdx)^{1/p}$
in $L^p(\Omega)$ is denoted by $\|\cdot\|_p$. We denote by $p^{\prime}$
the conjugate exponent of $p$, namely $p^{\prime} = (p-1)/p.$
\item We denote by $C,D>0$ any positive constants which are not
essential in the arguments and may vary from one line to another.}
\end{enumerate}
\section{Discussion and general frame of the proofs}\label{diff}

The aim of this section is, first, to provide
some intuition on the hypotheses in our theorems and the differences they introduce with respect to previous works on problems with natural growth in the gradient, and second, to describe the ideas of the proofs of Theorem \ref{theoexist} and Theorem  \ref{theorem1}.
To this goal, and in order to help the reader understand why the case $c_0^+ \not\equiv  0$ is different from the
cases $c_0 \leq 0$ or $c_0 \leq - \alpha_0 <0$, we present a variational interpretation of the model problem \eqref{problem0}.

Let us assume, for the time being, that $\mu >0$ is a constant and $c_0$
and $f$ are smooth functions. Making the
well-known
 change of unknown $ v= \frac{1}{\mu}(e^{\mu
u}-1)$ in \eqref{problem0}  we observe that if a solution
of
\begin{equation}\label{0.3}
- \Delta v - [c_0(x) + \mu f(x)] v = c_0(x) g(v) + f(x), \quad v \in
X,
\end{equation}
where
\begin{equation}\label{defg}
g(s) = \begin{cases} \text{$\frac{1}{\mu} (1+ \mu s) ln (1+ \mu s) - s \quad $   if $ \quad s > - \frac{1}{\mu}$} & \\
\noalign{\vskip4pt} \text{ $-s \quad$ if $\quad s \leq -
\frac{1}{\mu}\,,$} &
\end{cases}
\end{equation}
satisfies $v > - \frac{1}{\mu}$, then $u =
\frac{1}{\mu}ln(1+ \mu v)$ is a solution of~\eqref{problem0}. In the next section we are going to see that this procedure can be made rigorous in general, and we will obtain a priori bounds on solutions of \eqref{0.3} which show that they indeed give solutions of \eqref{problem0}, under the hypotheses of our theorems.

Equation~\eqref{0.3} admits a variational formulation, in other words, its solutions in $H^1_0(\Omega)$ can be represented as critical points of a functional defined on this space. Specifically,
critical points of
$$
       I(v)   := \frac{1}{2}\int_{\Omega}|\nabla v|^2 -[c_0(x) + \mu
f(x)] v^2 dx   - \int_{\Omega}c_0(x)G(v) \thinspace
dx   - \int_{\Omega}f(x)v \thinspace dx
$$
 on $H_0^1(\Omega)$ are weak solutions of~\eqref{0.3}. Here $G(s) = \int_0^s g(t)
\thinspace dt$.

No such link between problems of type \eqref{Generalf} and problems admitting a variational formulation has appeared in the earlier works on coercive equations with natural growth in the gradient \cite{BoMuPu2, BoMuPu3, DaGiPu, MaPaSa, FeMu1, FeMu2}.
The fact that the general problem \eqref{Generalf} does not have such a formulation surely explains this; however the validity of the results obtained in these papers can be explained, in a different light, by looking at the model problem \eqref{0.3}.

First, if we assume that $$c_0 \leq -\alpha_0 <0$$ (as in \cite{BoMuPu2, BoMuPu3, DaGiPu}), it is easily seen that we have, independently of the size of $f\in L^{\frac{N}{2}}(\Omega)$,
\begin{equation}\label{coer}
\lim_{|||v|||\to\infty} I(v) = + \infty,
\end{equation}
or in other words $I$ is {\it coercive}, from which the
existence of a global minimum of $I$ follows. Indeed, to prove \eqref{coer} we observe that the second term in the definition of $I$ dominates, for $|||v|||$ large, the first and third terms, since (see Lemma \ref{prop-g1})
\begin{equation}\label{infy}
\lim_{s\to \infty}\frac{G(s)}{s^2} = + \infty.
\end{equation}

Next, if $c_0 =0$ (as in \cite{MaPaSa}, \cite{FeMu1}, \cite{FeMu2}), then $I$
becomes
$$I(v) = \frac{1}{2}\int_{\Omega}|\nabla v|^2 - \mu f(x)v^2 \thinspace dx -
\int_{\Omega}f(x) v \thinspace dx\,$$ and it is easily seen that this functional is
coercive if and only if (see Lemma \ref{positivity})
\begin{equation}\label{0.4}
\inf_{\|v\|_{L^2(\Omega)} =1} \int_{\Omega}|\nabla v|^2 - \mu f(x)
v^2 dx
>0,
\end{equation}
which in turn holds under the condition \eqref{H1},  discovered in~\cite{FeMu1}.
\medskip

On the other hand, in the case we are interested in
 $$
 c_0^+\not\equiv0,$$
 the geometry of $I$ is completely different, now \eqref{infy} implies
\begin{equation}\label{nocoer}\inf_{v\in X} I(v) =\liminf_{|||v|||\to\infty} I(v)= -\infty,\end{equation} and in particular no global minimum of $I$ exists. \medskip

However, as we are going to see, it turns out that if $c_0^+$ is appropriately small, the functional $I$ takes strictly positive values on the boundary of some {\it large} ball $B$ in $H^1_0(\Omega)$.
In other words, we show that letting the coefficient $c_0$  be slightly positive perturbs badly $I$ at infinity (compare \eqref{coer} to \eqref{nocoer}) but keeps $I$ ``sufficiently large" on some large sphere. Hence, in view of
$I(0)=0$, it follows that  $I$ attains a {\it local} minimum in $B$, which is then a critical point of $I$.

The latter argument applies only to the extremal case  \eqref{problem0} but yields existence for  general equations as in  Theorem~\ref{theoexist}, via the method of sub- and super-solutions.  This method requires no variational structure at all, and applies to very general equations (see for instance \cite{AmCr, BoMuPu4, DeHe}). Note that the method of sub- and super-solutions is particularly useful in searching for stable solutions, and a local minimum of a functional corresponds precisely to a stable solution.

Let us now explain why Theorem \ref{theorem1} is valid. The existence of a local minimum of $I$ and \eqref{nocoer} suggest that  at least one more
critical point (of saddle type) of $I$ could be expected to exist. Proving this type of statement is the object of a large branch of the theory of variational methods in PDE, whose development started with the acclaimed work by Ambrosetti and Rabinowitz \cite{AmRa} on functionals which have ``mountain-pass" geometry, that is, are positive on a small sphere and tend to $-\infty$ at infinity. In our  case we are able to prove that a second critical point of $I$ exists by showing that Cerami sequences for $I$ are bounded, from which classical arguments permit us to deduce the result. The boundedness of Cerami sequences is a significant difficulty and to overcome it we need to develop further some ideas introduced in \cite{Je2}.

It is important to note that the latter argument depends strongly on the variational structure of the PDE in consideration, which is the reason for which we are able to prove Theorem \ref{theorem1} only for  the model equation \eqref{problem0}.  See Open Problem 1 in Section \ref{final}, and the remarks therein. \medskip

Here is an outline of  the following sections.  First, in Section \ref{link} we give some preliminaries and study the relation between the problems \eqref{problem0} and \eqref{0.3}. In Section~\ref{geometry} we establish several facts on the geometry of the functional $I(v)$, and show it admits a local minimum. The core of the multiplicity result is in Section \ref{cerami}, where we show that Cerami sequences for I are bounded. In Section~\ref{nonconstant} we finish the proof of  Theorems~\ref{theoexist} and~\ref{theorem1}. Section \ref{final} contains  some closing remarks and open problems.

\section{The link between problems \eqref{problem0} and \eqref{0.3}}\label{link}

We consider the problem
\begin{equation}\label{2.11}
- \Delta v - [c_0(x) + \mu f(x)] v = c_0(x) g(v) + f(x), \quad v
\in X,
\end{equation}
where $g$ is given by \eqref{defg} and $\mu>0$.

\begin{lemma}\label{dual1}
If $v \in X$
is a solution of~\eqref{2.11} which satisfies $$ v
> - 1 /\mu+\varepsilon\quad\mbox{on }\;\Omega,\qquad\mbox{for some }\;\varepsilon>0,$$ then $ u = \frac{1}{\mu} ln (1 + \mu v)$ is a solution of~\eqref{problem0}.
\end{lemma}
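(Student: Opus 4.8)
The plan is to perform the substitution $u = \frac{1}{\mu}\ln(1+\mu v)$ explicitly, compute $\nabla u$ and $-\Delta u$ in terms of $v$, and check that the resulting expression reduces to the right-hand side of \eqref{problem0} precisely when $v$ solves \eqref{2.11}. Throughout, the hypothesis $v > -1/\mu + \varepsilon$ on $\Omega$ is what makes this legitimate: the map $s\mapsto \frac{1}{\mu}\ln(1+\mu s)$ is smooth and Lipschitz on $[-1/\mu+\varepsilon,\infty)$, so $u = \frac{1}{\mu}\ln(1+\mu v)\in X = H^1_0(\Omega)$ whenever $v\in X$, and one may differentiate under the chain rule in the weak sense.

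First I would record the pointwise identities. Writing $w = 1+\mu v > \mu\varepsilon > 0$, we have
$$
\nabla u = \frac{1}{w}\,\nabla v, \qquad \mu|\nabla u|^2 = \frac{\mu}{w^2}|\nabla v|^2,
$$
and therefore, formally,
$$
-\Delta u = -\,\mathrm{div}\!\left(\frac{\nabla v}{w}\right) = -\frac{\Delta v}{w} + \frac{\mu|\nabla v|^2}{w^2}.
$$
Hence $-\Delta u - \mu|\nabla u|^2 = -\frac{\Delta v}{w}$, so \eqref{problem0} holds for $u$ if and only if $-\Delta v = w\left(c_0(x)u + f(x)\right)$, i.e.
$$
-\Delta v = (1+\mu v)\Big(c_0(x)\,\tfrac{1}{\mu}\ln(1+\mu v) + f(x)\Big).
$$
Now I would expand the right-hand side and match it against \eqref{2.11}. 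Since on the set $\{v > -1/\mu\}$ the function $g$ from \eqref{defg} satisfies $g(s) = \frac{1}{\mu}(1+\mu s)\ln(1+\mu s) - s$, we get $(1+\mu v)\,\frac{1}{\mu}\ln(1+\mu v) = g(v) + v$, and $(1+\mu v)f(x) = f(x) + \mu f(x) v$. Substituting, the equation becomes
$$
-\Delta v = c_0(x)\big(g(v)+v\big) + f(x) + \mu f(x) v = [c_0(x)+\mu f(x)]v + c_0(x)g(v) + f(x),
$$
which is exactly \eqref{2.11}. This establishes the algebraic equivalence.

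It remains to justify the computation at the level of weak solutions, which I regard as the only genuine obstacle — the rest is bookkeeping. Given $v\in X$ solving \eqref{2.11}, I would first note that $v\in L^\infty(\Omega)$: indeed the right-hand side $c_0(x)g(v)+f(x) + [c_0+\mu f]v$ lies in $L^p(\Omega)$ with $p>N/2$ once one knows $v$ is bounded, but to get boundedness in the first place one invokes the regularity theory already available for \eqref{2.11} (or uses that $u=\frac1\mu\ln(1+\mu v)$ will inherit boundedness; in practice Lemma \ref{dual1} is applied to solutions already known to be bounded, or boundedness of $v$ follows from a standard De Giorgi–Stampacchia truncation argument using $c_0,f\in L^p$, $p>N/2$, together with $|g(v)|\lesssim |v|\ln(1+|v|)$, which is subcritical). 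With $v\in L^\infty$ and $v\ge -1/\mu+\varepsilon$, the function $\Phi(s)=\frac1\mu\ln(1+\mu s)$ is $C^1$ with bounded derivative on the range of $v$, so $u=\Phi(v)\in X$ and $\nabla u = \Phi'(v)\nabla v = \frac{1}{1+\mu v}\nabla v\in L^2$. For any test function $\varphi\in C_c^\infty(\Omega)$, I would take $\psi = \frac{\varphi}{1+\mu v}$ as a test function in the weak formulation of \eqref{2.11} — this is admissible since $\psi\in X\cap L^\infty$ — and the chain rule gives $\nabla\psi = \frac{\nabla\varphi}{1+\mu v} - \frac{\mu\varphi}{(1+\mu v)^2}\nabla v$. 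Plugging in and using $\nabla v\cdot\nabla\psi = \nabla u\cdot\nabla\varphi - \mu|\nabla u|^2\varphi$ after dividing through, one obtains exactly
$$
\int_\Omega \nabla u\cdot\nabla\varphi\,dx = \int_\Omega \big(c_0(x)u + \mu|\nabla u|^2 + f(x)\big)\varphi\,dx,
$$
i.e. $u$ is a (bounded) weak solution of \eqref{problem0}. The care needed is entirely in checking that every integral here is finite (which follows from $v\in L^\infty$, $\nabla v\in L^2$, $1+\mu v\ge\mu\varepsilon$, and $c_0,f\in L^p\subset L^1$) and that the test function $\psi$ is legitimate; no compactness or variational argument is required.
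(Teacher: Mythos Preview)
Your proof is correct and follows essentially the same route as the paper: rewrite \eqref{2.11} on $\{v>-1/\mu\}$ in the form $-\Delta v = \frac{c_0}{\mu}(1+\mu v)\ln(1+\mu v) + (1+\mu v)f$, and test it with $\psi=\varphi/(1+\mu v)$ to recover the weak formulation of \eqref{problem0} for $u$. The only remark is that your detour through $v\in L^\infty(\Omega)$ is unnecessary here: the lower bound $1+\mu v\ge\mu\varepsilon$ already makes $\Phi'(s)=1/(1+\mu s)$ bounded on $[-1/\mu+\varepsilon,\infty)$ (as you in fact observed at the outset), so $u=\Phi(v)\in X$ and $\psi\in X$ follow without any upper bound on $v$.
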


\begin{proof}
The equation~\eqref{2.11} can be rewritten, for $ v
> - 1 /\mu$,
\begin{equation}\label{2.12}
- \Delta v = \frac{c_0(x)}{\mu}(1 + \mu v) ln (1 + \mu v) + (1 + \mu v) f(x).
\end{equation}
Let $v \in X$ be a  solution of~\eqref{2.12}, we want to show that
$ u = \frac{1}{\mu}ln(1+ \mu v)$ is a  solution
of~\eqref{problem0}, that is, if $\phi \in
C_0^{\infty}(\Omega)$, then
\begin{equation}\label{2.13}
\int_{\Omega} \nabla u \nabla \phi - \mu |\nabla u|^2 \phi -
c_0(x) u \phi \thinspace dx = \int_{\Omega} f(x) \phi \thinspace
dx.
\end{equation}
Let $\displaystyle \psi = \frac{\phi}{1+ \mu v}.$ Clearly $\psi
\in X$ and thus it can be used to test~\eqref{2.12}. We get
\begin{equation}\label{2.15}
\int_{\Omega} \nabla v \nabla \psi \thinspace dx = \int_{\Omega}
\frac{c_0(x)}{\mu} ln(1+ \mu v) \phi \thinspace dx +
\int_{\Omega}f(x) \phi \thinspace dx.
\end{equation}
But
\begin{equation}\label{2.16}
\int_{\Omega} \frac{c_0(x)}{\mu} ln(1+ \mu v) \phi \thinspace dx =
\int_{\Omega}c_0(x) u \phi \thinspace dx
\end{equation}
and
\begin{eqnarray}
      \nonumber \int_{\Omega} \nabla v \nabla \psi \thinspace dx  & = &
       \int_{\Omega}\nabla \left( \frac{1}{\mu}(e^{\mu u}-1)\right)\nabla \left(\frac{\phi}{1+ \mu v}\right)dx \\
       \nonumber
& = &     \int_{\Omega} e^{\mu u} \nabla u \left( \frac{\nabla \phi}{1+ \mu v}-
\frac{\mu \phi \nabla v }{(1+ \mu v)^2}\right) dx \\
\nonumber & = & \int_{\Omega} \nabla u \left( \nabla \phi -
\frac{\mu \phi \nabla
(\frac{1}{\mu}(e^{\mu u}-1))}{(1+ \mu v)} \right)dx \\
 & =&  \int_{\Omega }\nabla u (\nabla \phi - \mu \phi
\nabla u ) \thinspace dx \nonumber \\
\label{2.17} & =&  \int_{\Omega} \nabla u \nabla \phi -
\mu |\nabla u|^2 \phi \thinspace dx.
   \end{eqnarray}
Combining~\eqref{2.15},~\eqref{2.16} and~\eqref{2.17}, we see that $u$
satisfies~\eqref{2.13}.
\end{proof}

Next we recall the following standard fact.

\begin{lemma}\label{positivity}
 Given $h\in L^{N/2}(\Omega)$, set
$$
E_h^2(u) =  \int_{\Omega} |\nabla u|^2 - h(x) |u|^2 dx,
$$
for $u\in X$. Then
  $$\|h^+\|_{\frac{N}{2}} < C_N$$  implies that the quantity $E_h(u)$ defines a norm on $X$
which is equivalent to the standard norm, and $$\lambda(h,\Omega) := \inf_{u\in X\setminus\{0\}}\frac{ E_h (u)}{|||u|||^2}>0.$$
This last property implies that the operator $-\Delta - h $ satisfies the maximum principle in $\Omega$, that is, if $-\Delta u - hu \ge 0$ in $X^{-1}$ for some $u\in X$, then  $u^-\in X$ yields $u^-\equiv0$ in $\Omega$.
\end{lemma}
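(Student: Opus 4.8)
The plan is to derive everything from the optimal Sobolev inequality $\|u\|_{2^*}^2\le C_N\|u\|^2$, $2^*=\frac{2N}{N-2}$, together with H\"older's inequality. First I would record the basic estimate: for any $g\in L^{N/2}(\Omega)$ and any $u\in X$, H\"older with exponents $\frac N2$ and $\frac{N}{N-2}$ gives
$$
\int_\Omega |g|\,u^2\,dx\ \le\ \|g\|_{\frac N2}\,\bigl\|u^2\bigr\|_{\frac{N}{N-2}}\ =\ \|g\|_{\frac N2}\,\|u\|_{2^*}^2\ \le\ C_N\|g\|_{\frac N2}\,\|u\|^2 .
$$
Applying this to $g=h^+$ and $g=h^-$ and using $h=h^+-h^-$, I obtain the two-sided bound
$$
\bigl(1-C_N\|h^+\|_{\frac N2}\bigr)\,\|u\|^2\ \le\ E_h^2(u)\ \le\ \bigl(1+C_N\|h^-\|_{\frac N2}\bigr)\,\|u\|^2\qquad\text{for all }u\in X .
$$

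Second, under the hypothesis $\|h^+\|_{N/2}<C_N^{-1}$ the constant $\delta:=1-C_N\|h^+\|_{N/2}$ is strictly positive. Hence $E_h^2$ is a positive definite quadratic form; being the quadratic form of the symmetric bilinear form $(u,v)\mapsto\int_\Omega\nabla u\cdot\nabla v-h\,uv\,dx$ on $X$, its square root $E_h$ is a norm, and the two displayed inequalities (the upper one needs only $h\in L^{N/2}$) show that this norm is equivalent to $\|\cdot\|$. Dividing the lower bound by $\|u\|^2$ and taking the infimum over $u\in X\setminus\{0\}$ yields $\lambda(h,\Omega)\ge\delta>0$.

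Third, for the maximum principle, let $u\in X$ satisfy $-\Delta u-hu\ge0$ in $X^{-1}$, that is
$$
\int_\Omega\nabla u\cdot\nabla\varphi-h\,u\varphi\,dx\ \ge\ 0\qquad\text{for every }\varphi\in X,\ \varphi\ge0 .
$$
Since $u\in H^1_0(\Omega)$ its negative part $u^-$ lies in $X$ and is admissible; inserting $\varphi=u^-$ and using the a.e. identities $\nabla u\cdot\nabla u^-=-|\nabla u^-|^2$ and $u\,u^-=-|u^-|^2$, the inequality becomes $-E_h^2(u^-)\ge0$, i.e. $E_h^2(u^-)\le0$. Comparing with $E_h^2(u^-)\ge\delta\|u^-\|^2$ from the previous step forces $\|u^-\|=0$, that is $u^-\equiv0$ in $\Omega$.

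This lemma is essentially standard, so I do not anticipate a real obstacle; the only points needing care are tracking the optimal constant $C_N$ through the H\"older--Sobolev chain so that the sharp threshold $C_N^{-1}$ appears, and justifying that $u^-$ is an admissible test function with the stated identities for $\nabla u^-$ --- both classical (the latter via the chain rule of Stampacchia for Lipschitz truncations of $H^1$ functions).
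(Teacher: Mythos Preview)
Your proof is correct and follows essentially the same route as the paper's: both use the H\"older--Sobolev chain $\int_\Omega h\,u^2\le C_N\|h\|_{N/2}\|u\|^2$ to obtain the coercivity of $E_h^2$, and both establish the maximum principle by testing the inequality $-\Delta u-hu\ge0$ with $u^-$ and integrating. The only difference is that you spell out the two-sided bound via the split $h=h^+-h^-$ and the Stampacchia identities for $\nabla u^-$, whereas the paper leaves these details implicit.
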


\begin{proof}
 The first statement trivially follows from the Sobolev embedding and the fact that for any $v \in X,$
\begin{equation}\label{2.18}
\int_{\Omega} h(x) v^2 dx \leq  \|h\|_{\frac{N}{2}} \|v\|_{2^*}^2
\leq  \frac{1}{C_N} \|h\|_{\frac{N}{2}} \|\nabla v\|_2^2.
\end{equation}
Here $2^* = \frac{2N}{N-2}$ and
\begin{equation}\label{sobo}
C_N = inf \{ \|\nabla v\|_2^2 \;:\; v\in X,\;\|v\|_{2^*}^2=1\}>0
\end{equation}
is the optimal constant in Sobolev's inequality.
Note $C_N$ depends only on $N$; the exact value of $C_N$ can be found  in \cite{Ta}. The maximum principle is obtained by multiplying $-\Delta u - hu \ge 0$ by $u^-$ and by integrating.
\end{proof}\smallskip

\noindent {\bf Definition}. {\it In the rest of the paper we assume that the constant $\ov{c}>0$ in the main theorems is fixed so small that
\medskip

\noindent ${\bf (H2)}$ \centerline{$||c_0 + \mu f^+||_{\frac{N}{2}} < C_N.$}\\

We denote with $||\cdot||$ the norm  defined by $E_{c_0+\mu f}(\cdot)$ which, by Lemma~\ref{positivity}, is equivalent to the standard norm on $X$.}

By \eqref{H2}, the validity of  (H2) can be ensured by taking
\begin{equation}\label{smallc}
\|c_0\|_{\frac{N}{2}}\le \varepsilon_0/2,\qquad\mbox{with}\qquad
\varepsilon_0:=C_N-\mu\|f^+\|_{\frac{N}{2}}>0,
\end{equation}
 which occurs for $||c_0||_p$ sufficiently
small, since $|\Omega| < \infty$.

 Now we recall the following global boundedness lemma, which is a
consequence of results due to Stampacchia and Trudinger.
\begin{lemma}\label{trudi} Assume that $A \in L^{\infty}(\Omega)^{N \times N}$, $\Lambda I\ge A \geq \lambda I$, for some  $\Lambda \ge \lambda>0$, and that $c,f \in L^p(\Omega)$ for some $p>\frac{N}{2}$. Then if $u\in X$ is a solution of
$$
-\mathrm{div}(A(x)\nabla u) \le (\ge) c(x) u + f(x)
$$
then $u$ is bounded above(below) and
$$
\sup_{\Omega} u^+ (\sup_\Omega u^-) \le C(\|u^+(u^-)\|_2 + \|f\|_p),
$$
where $C$ depends on $N, p, \lambda,\Lambda,|\Omega|$, and $\|c\|_p$.
\end{lemma}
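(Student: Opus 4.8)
The plan is to run the classical De Giorgi--Stampacchia truncation argument; the only point demanding care is the contribution of the zero-order coefficient $c$. It suffices to treat the statement on upper bounds: if $u\in X$ satisfies $-\dvg(A\nabla u)\ge cu+f$ weakly, then $\tilde u:=-u$ satisfies $-\dvg(A\nabla\tilde u)\le c\tilde u+\tilde f$ with $\tilde f:=-f$, so that $\|\tilde f\|_p=\|f\|_p$ and $\tilde u^+=u^-$, and the bound on $u^-$ follows by applying the upper bound to $\tilde u$. So assume $u\in X$ and $\int_\Omega A\nabla u\cdot\nabla\varphi\,dx\le\int_\Omega(cu+f)\varphi\,dx$ for every $\varphi\in X$ with $\varphi\ge0$. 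For $k\ge0$ put $A_k:=\{x\in\Omega:u(x)>k\}$ and $w_k:=(u-k)^+\in X$, which is a legitimate (nonnegative) test function. Using $\nabla w_k=\chi_{A_k}\nabla u$, the ellipticity $A\ge\lambda I$, and $u=w_k+k$ on $A_k$, one obtains the Caccioppoli-type inequality
\[
\lambda\int_{A_k}|\nabla w_k|^2\,dx\ \le\ \int_{A_k}|c|\,w_k^2\,dx\ +\ \int_{A_k}\bigl(k\,|c|+|f|\bigr)\,w_k\,dx.
\]

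Next I would estimate the two terms on the right by Hölder's inequality together with the Sobolev inequality $\|w\|_{2^*}^2\le D\|\nabla w\|_2^2$ ($2^*=2N/(N-2)$, $D=D(N)$), using that $p>\frac{N}{2}$ is \emph{exactly} the condition under which $p'<\frac{2^*}{2}$. This bounds $\int_{A_k}|c|\,w_k^2$ by $D\|c\|_p\,|A_k|^{\theta}\int_{A_k}|\nabla w_k|^2$ for some exponent $\theta=\theta(N,p)>0$, and $\int_{A_k}(k|c|+|f|)w_k$ by $D\bigl(k\|c\|_p+\|f\|_p\bigr)|A_k|^{\eta}\|\nabla w_k\|_2$ with $\eta=\eta(N,p)>0$. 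Since $u^+\in L^2(\Omega)$, Chebyshev's inequality gives $|A_k|\le k^{-2}\|u^+\|_2^2$, so I may fix a level $k_0>0$ — depending only on $N,p,\lambda,|\Omega|,\|c\|_p$ and $\|u^+\|_2$ — so large that $D\|c\|_p\,|A_{k_0}|^{\theta}\le\lambda/2$. For every $k\ge k_0$ the first term is then absorbed on the left, and one is left with
\[
\|w_k\|_{2^*}\ \le\ D\bigl(k\|c\|_p+\|f\|_p\bigr)\,|A_k|^{\eta},
\]
for a constant $D=D(N,p,\lambda)$ (which, as elsewhere, may change from line to line).

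Finally, for $h>k\ge k_0$ the inclusion $A_h\subset A_k$ and the pointwise bound $w_k\ge h-k$ on $A_h$ give $|A_h|\le(h-k)^{-2^*}\|w_k\|_{2^*}^{2^*}$, hence
\[
|A_h|\ \le\ D\,\bigl(k\|c\|_p+\|f\|_p\bigr)^{2^*}(h-k)^{-2^*}\,|A_k|^{\beta},\qquad\text{where }\beta:=2^*\eta,
\]
and an elementary computation gives $\beta>1$, once more precisely because $p>\frac{N}{2}$. Restricting $k$ to a bounded interval $[k_0,k_0+d]$ makes the prefactor $(k\|c\|_p+\|f\|_p)^{2^*}$ bounded by $((k_0+d)\|c\|_p+\|f\|_p)^{2^*}$, so Stampacchia's iteration lemma applies on that interval and yields $|A_{k_0+d}|=0$ — that is, $\sup_\Omega u=\sup_\Omega u^+\le k_0+d$ — provided $d$ satisfies a relation of the form $d^{2^*}\ge D\bigl((k_0+d)\|c\|_p+\|f\|_p\bigr)^{2^*}|A_{k_0}|^{\beta-1}$. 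The coefficient here is a concrete constant times $|A_{k_0}|^{(\beta-1)/2^*}$, which I can make as small as I wish by further enlarging $k_0$ (still keeping $k_0\le D\|u^+\|_2$); after one such enlargement the implicit relation for $d$ is solvable with $d\le D(k_0\|c\|_p+\|f\|_p)$, for a constant $D=D(N,p,\lambda,\|c\|_p)$. Collecting the bounds on $k_0$ and $d$ gives $\sup_\Omega u^+\le k_0+d\le C\bigl(\|u^+\|_2+\|f\|_p\bigr)$ with $C=C(N,p,\lambda,\Lambda,|\Omega|,\|c\|_p)$, as claimed; the case $\|u^+\|_2=0$ means $u\le0$ and the estimate is trivial.

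The main obstacle — and essentially the only non-routine point — is the presence of the zero-order coefficient $c$. It is responsible for the two Hölder steps, and through them it forces the borderline exponent requirement $p>\frac{N}{2}$ to enter at two places: to absorb $\int_{A_k}|c|w_k^2$ into the energy, and to guarantee $\beta>1$ in Stampacchia's lemma. It also produces a constant in the iteration inequality that grows linearly in the truncation level $k$; this is dealt with by first localizing $k$ to a finite interval $[k_0,k_0+d]$ and then resolving the resulting implicit inequality for the interval length $d$, after enlarging $k_0$ enough. Everything else — the Caccioppoli inequality, the Sobolev estimate, and the bookkeeping via Chebyshev and the Stampacchia lemma — is entirely standard.
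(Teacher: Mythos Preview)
Your argument is correct: it is precisely the De Giorgi--Stampacchia truncation proof that the paper itself points to but does not spell out. In the paper the lemma is simply deduced from Theorem~4.1 in Trudinger (together with Remark~1 there) or, alternatively, by repeating the proof of Theorem~8.15 in Gilbarg--Trudinger with the modification for $c\in L^p$, $p>N/2$, noted at the end of page~193 of that book. What you have written is exactly that modification: test with $(u-k)^+$, absorb $\int_{A_k}|c|\,w_k^2$ using $|A_k|^\theta\to0$ (which requires $p>N/2$), and run the level-set iteration. The only wrinkle beyond the textbook version is the $k$-dependent prefactor coming from $k|c|$; your device of freezing $k$ to a bounded interval $[k_0,k_0+d]$ and then solving the resulting implicit inequality for $d$ after enlarging $k_0$ is a clean way to handle it. So your proof is a self-contained expansion of what the paper leaves to citations; nothing essential differs, and the dependence on $\Lambda$ in the statement is in fact not needed for the one-sided bound you derive.
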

\begin{proof} This is a consequence of Theorem 4.1 in \cite{Tr} combined with Remark 1 on page 289 in that paper. It can also be obtained by repeating the proof of Theorem 8.15 in \cite{GT} (which implies the same result for $c\in L^\infty(\Omega)$), as remarked at the end of page 193 in that book.
\end{proof}

The next lemma shows that Lemma \ref{dual1} can be applied,
provided the function $c_0$ is sufficiently small.

\begin{lemma}\label{negative-bound}
 There exists a constant $\ov{c}>0$ depending on $N, p$, $|\Omega|$,
  $\mu\|f^{+}\|_p$, such that if
$$
\mu\|f^{+}\|_{\frac{N}{2}}
<{C_N},\qquad\mbox{and}\qquad
\max\{\|c_0\|_{p}\,,\,\mu\|f^{-}\|_{p}\}<
\ov{c},
$$ then any solution $v$
of~\eqref{2.11} satisfies $v > - 1 / (2\mu) $ in $\Omega$.
\end{lemma}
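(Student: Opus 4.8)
The plan is to obtain the bound in two stages. \emph{Stage 1: $v\ge -1/\mu$ for \emph{every} solution of \eqref{2.11}, using only $\mu\|f^{+}\|_{N/2}<C_N^{-1}$.} Here I would test \eqref{2.11} with $\varphi:=(1+\mu v)^{-}$; this lies in $X$ because $\varphi=\mu\,(v+\tfrac1\mu)^{-}$ and $t\mapsto(t+\tfrac1\mu)^{-}$ is Lipschitz and vanishes at $0$. On the set $\{v<-1/\mu\}$, where $\varphi>0$, one has $g(v)=-v$ by \eqref{defg} and $1+\mu v=-\varphi$, so in \eqref{2.11} the zero order term $(c_0+\mu f)v$ and $c_0 g(v)=-c_0 v$ combine, leaving $-\Delta v=f(1+\mu v)$ there; testing then collapses to the identity
\[
\tfrac1\mu\|\nabla\varphi\|^{2}=\int_{\Omega}f\varphi^{2}\,dx\ \le\ \int_{\Omega}f^{+}\varphi^{2}\,dx\ \le\ C_N\|f^{+}\|_{N/2}\,\|\nabla\varphi\|^{2}
\]
by \eqref{2.18}. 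Since $\mu\|f^{+}\|_{N/2}<C_N^{-1}$ the coefficient $\tfrac1\mu-C_N\|f^{+}\|_{N/2}$ is strictly positive, which forces $\nabla\varphi\equiv0$, hence $\varphi\equiv0$, i.e.\ $v\ge -1/\mu$ a.e.\ in $\Omega$.

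\emph{Stage 2: upgrading to $v>-\tfrac1{2\mu}$.} Set $w:=1+\mu v\ge 0$, so $w-1\in X$. Since $v\ge -1/\mu$ a.e., \eqref{2.11} may be rewritten as \eqref{2.12}, which multiplied by $\mu$ reads $-\Delta w=c_0\,w\ln w+\mu w f$ in $X^{-1}$ (all the products here — $c_0 w\ln w$, $fw$, $f^{-}w$ — belong to $X^{-1}$ since $w\in L^{2^*}$ and $c_0,f\in L^{p}$ with $p>N/2$). Adding $\mu f^{-}w$ to both sides, and using $t\ln t\ge-1/e$ for $t\ge0$ together with $c_0,w,f^{+}\ge0$, gives the differential inequality
\[
-\Delta w+\mu f^{-}w\ \ge\ -\,\frac{c_0}{e}\qquad\text{in }\ \Omega .
\]
Let $W\in 1+X$ be the unique weak solution of $-\Delta W+\mu f^{-}W=-c_0/e$ in $\Omega$ (Lax--Milgram, as $\mu f^{-}\ge0$). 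Because $-\Delta+\mu f^{-}$ satisfies the maximum principle, the usual comparison argument (multiply $-\Delta(w-W)+\mu f^{-}(w-W)\ge0$ by $(w-W)^{-}\in X$) gives $w\ge W$ in $\Omega$.

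It then remains to show $W$ stays close to its boundary value $1$ when $c_0$ and $f^{-}$ are small. Writing $W=1+\zeta$ with $\zeta\in X$ solving $-\Delta\zeta+\mu f^{-}\zeta=-(c_0/e+\mu f^{-})$, an energy estimate (test with $\zeta$, drop $\mu\int f^{-}\zeta^{2}\ge0$, use \eqref{2.18} and Hölder, with $|\Omega|<\infty$ and $p>\tfrac{2N}{N+2}$) gives $\|\nabla\zeta\|\le C(\|c_0\|_{p}+\|f^{-}\|_{p})$, and then Lemma \ref{trudi}, applied with $A=I$ and zero order coefficient $-\mu f^{-}$, yields $\|\zeta\|_{\infty}\le C^{*}(\|c_0\|_{p}+\|f^{-}\|_{p})$, with $C^{*}$ depending only on $N,p,|\Omega|,\mu$ (it suffices to take $\ov{c}\le1$ at the outset so that the coefficient norm controlling the constant in Lemma \ref{trudi} is $\le1$). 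Fixing $\ov{c}$ so small that this last quantity is $<\tfrac12$ whenever $\|c_0\|_{p}<\ov{c}$ and $\mu\|f^{-}\|_{p}<\ov{c}$, we conclude $1+\mu v=w\ge W=1+\zeta>\tfrac12$ a.e., i.e.\ $v>-\tfrac1{2\mu}$ in $\Omega$.

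The pitfall — and the reason for this two-stage detour — is that the obvious move, testing the equation against $(v+\tfrac1{2\mu})^{-}$, is worthless: since $g\ge0$ (see \eqref{defg}) the zero order nonlinearity has a favourable sign and such a test function only produces the vacuous $-\|\nabla(\cdot)\|^{2}\le(\text{something}\ge0)$. Hence the smallness of $c_0$ and $f^{-}$ cannot be read off from an energy identity at all; it has to come from comparing $w$ with the solution of a \emph{linear} problem that is a small perturbation of the constant $1$. Making that comparison work is exactly what the crude bound of Stage 1 is for — it is what makes $w\ge0$, hence $w\ln w\ge-1/e$ — and this interplay (crude bound first, linear comparison second) is the step I expect to carry the real content of the proof.
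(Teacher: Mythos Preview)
Your proof is correct, but it is considerably more involved than the paper's, and your diagnosis of the ``pitfall'' is off.

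The paper's argument is a single, short energy estimate. Since $c_0\ge0$ and $g\ge0$, every solution of \eqref{2.11} satisfies the linear inequality $-\Delta v-(c_0+\mu f^+)v\ge -f^-$ on $\{v<0\}$. Testing \eqref{2.11} with $v^-$ and \emph{dropping} the nonnegative term $\int c_0 g(v)v^-$ (this is where the favourable sign is used, not discarded) yields $\|v^-\|^2\le\int f^- v^-\le C\|f^-\|_p\|v^-\|$, so $\|v^-\|_2\le C\|f^-\|_p$. Lemma~\ref{trudi} applied to the linear inequality then gives $\sup v^-\le C(\|v^-\|_2+\|f^-\|_p)\le C\|f^-\|_p$, and one chooses $\ov c$ so that this is $<1/(2\mu)$.

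So the ``obvious'' energy move does work --- one just tests with $v^-$ rather than $(v+\tfrac1{2\mu})^-$, and aims to \emph{bound} $v^-$ rather than annihilate a shifted negative part. The sign $c_0 g(v)\ge0$ is precisely what lets the nonlinear term be thrown away, reducing everything to the linear inequality driven by $f^-$ alone. Your two-stage route (first $v\ge-1/\mu$ via the test function $(1+\mu v)^-$, then a linear comparison for $w=1+\mu v$) is a valid and rather elegant alternative --- Stage~1 in particular is a nice self-contained observation --- but the detour through the auxiliary problem for $W$ and the $L^\infty$ estimate on $\zeta$ reproduces, in a more roundabout way, exactly what the direct combination ``test with $v^-$ then apply Lemma~\ref{trudi}'' achieves in two lines.
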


\begin{proof} Since $c_0 \geq 0$ on $\Omega$ and
$g$ is nonnegative on $\R$,  any solution of~\eqref{2.11}
satisfies
\begin{equation}\label{2.112}
- \Delta v - [c_0(x) + \mu f^+(x)] v \geq - f^-\quad\mbox{ on }\mbox{ the }\mbox{ set } \{v<0\}.
\end{equation}
We now use the global bound given in the previous lemma to infer that
\begin{equation}\label{2.113}
\sup_\Omega(v^-) \leq C (||v^-||_2 + ||f^-||_p),
\end{equation}
for some constant $C = C(N,p, |\Omega|, ||c_0+\mu f^+||_p)$.

Recall that if we assume $\ov{c}>0$ is small enough (H2) holds, and thus $||\cdot||$ is equivalent to the standard norm on $X$. We multiply~\eqref{2.11} by $v^-$,  and integrate to get
\begin{eqnarray*}
||v^-||^2  & \leq & \int_{\Omega}|\nabla v^-|^2 - [c_0(x) + \mu f(x)]
|v^-|^2 dx\\
&\leq& - \int_{\Omega}c_0(x) g(v) v^- dx
- \int_{\Omega}f(x) v^- dx\\
&\leq&  \int_{\Omega}f^-(x) v^- dx \\
&\leq&   ||f^-||_{\frac{N}{2}} ||v^-||_{\frac{N}{N-2}} \leq C
||f^-||_{\frac{N}{2}}||v^-||.
\end{eqnarray*}
Thus, in particular,
\begin{equation} \label{2.115}
||v^-||_2 \leq  C ||f^-||_p.
\end{equation}
Combining~\eqref{2.113} and~\eqref{2.115} we get $\sup(v^-) \leq C
||f^-||_p$. This implies that  $\sup(v^-) < 1 /(2\mu)$ if $C ||f^-||_p < 1 /(2\mu) $, that is,  if $||\mu f^-||_p \leq 1/ (2C) $. This finishes the proof.
\end{proof}

\section{On the geometry of the functional $I(v)$}\label{geometry}

 We associate to
\eqref{2.11} the functional $I : X \to \R$ defined by
$$ I(v) = \frac{1}{2}||v||^2- \int_{\Omega}c_0(x) G(v) \thinspace dx -
\int_{\Omega}f(x) v \thinspace dx.$$ Under our assumptions it is
standard to show that $I\in C^1(X, \R)$\footnote{Note that in
Sections \ref{geometry} and \ref{cerami} $\mu$ can be an arbitrary
 function in $L^\infty(\Omega)$}. \vskip2pt \noindent

Recall $G(s) = \int_0^s g(t)dt$ and define $H(s) = \frac{1}{2}g(s)s
-G(s).$ In the following lemma we gather some simple and useful properties of
$g, G$ and $H$.

\begin{lemma}\label{prop-g1}
\vskip2pt \noindent
\begin{itemize}
\item[(i)] The function $g$ is continuous on $\R$, $g > 0$ on $\R\setminus\{0\}$, $G \geq 0$ on~$\R^+$
and $G \leq 0$ on $\R^-$. \item[(ii)] For any $r\in(1,2)$ there exists $C=C(r,\mu) >0$  such that we have $|g(s)| \leq C |s|^r$ for any $s\in \mathbb{R}$.
\item[(iii)] We have  $ g(s)/s \to 0$ as $s \to 0$.
\item[(iv)] We have $g(s)/s \to + \infty$ and $G(s)/s^2 \to + \infty$ as $s \to + \infty$.
\item[(v)] The function $H$ satisfies
$\displaystyle H(s) \leq (s/t) H(t)$, for $0 \leq s \leq t$.
\item[(vi)] The function $H$ is bounded on $\R^-$.
\end{itemize}
\end{lemma}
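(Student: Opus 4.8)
The plan is to reduce everything to elementary one‑variable calculus via the substitution $t=1+\mu s$, under which, for $s>-1/\mu$, one has $g(s)=\phi(t)/\mu$ with $\phi(t):=t\ln t-t+1$; since $\phi(1)=0$ and $\phi'(t)=\ln t$, the function $\phi$ is strictly decreasing on $(0,1)$ and strictly increasing on $(1,\infty)$, whence $\phi\ge 0$ with equality only at $t=1$. This immediately gives (i): $g\ge 0$ on $(-1/\mu,\infty)$ with the only zero at $s=0$, while $g(s)=-s\ge 1/\mu>0$ for $s\le -1/\mu$; the signs of $G=\int_0^{\cdot}g$ on $\R^{+}$ and $\R^{-}$ then follow at once. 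For later use I record the derivatives on $(-1/\mu,\infty)$: $g'(s)=\ln(1+\mu s)$ and $g''(s)=\mu/(1+\mu s)>0$.

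For (ii) — and hence (iii), which follows by choosing any $r\in(1,2)$ and writing $|g(s)/s|\le C|s|^{r-1}\to 0$ — I would show that $|g(s)|/|s|^r$ is bounded on $\R\setminus\{0\}$ by examining three regimes and invoking continuity on compact sets: near the origin a Taylor expansion gives $g(s)=\frac{\mu}{2}s^2+o(s^2)$, so $|g(s)|/|s|^r=O(|s|^{2-r})\to 0$; as $s\to-\infty$ one has $g(s)=|s|$ and $|s|/|s|^r=|s|^{1-r}$ stays bounded on $\{s\le -1/\mu\}$; and as $s\to+\infty$, $g(s)=O(s\ln s)=o(s^r)$ because $r>1$. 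Alternatively (iii) can be read off directly from $g(s)/s=\frac{(1+\mu s)\ln(1+\mu s)}{\mu s}-1\to 0$ as $s\to 0$.

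Item (iv) is quickest in two steps: the same identity shows $g(s)/s=\frac{1+\mu s}{\mu s}\ln(1+\mu s)-1\to +\infty$ as $s\to +\infty$; and then, given $M>0$, one picks $s_0$ with $g(t)\ge Mt$ for $t\ge s_0$, so that $G(s)\ge G(s_0)+\frac{M}{2}(s^2-s_0^2)$ for $s\ge s_0$, whence $\liminf_{s\to\infty}G(s)/s^2\ge M/2$; since $M$ was arbitrary, $G(s)/s^2\to +\infty$.

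The only point needing a genuine (though short) computation is (v), which I regard as the crux. The inequality $H(s)\le (s/t)H(t)$ for $0\le s\le t$ is equivalent to $s\mapsto H(s)/s$ being nondecreasing on $(0,\infty)$ (the case $s=0$ being trivial as $H(0)=0$), i.e. to $\Psi(s):=sH'(s)-H(s)\ge 0$ there. Using $H'(s)=\frac12\big(sg'(s)-g(s)\big)$ one gets $\Psi(s)=\frac12 s^2 g'(s)-sg(s)+G(s)$, so $\Psi(0)=0$ and, differentiating, $\Psi'(s)=\frac12 s^2 g''(s)=\frac{\mu s^2}{2(1+\mu s)}\ge 0$ for $s\ge 0$; hence $\Psi\ge 0$ on $[0,\infty)$, which is the claim. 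Finally (vi) follows by an explicit evaluation: for $s\le -1/\mu$ we have $g(s)=-s$ and $G(s)=G(-1/\mu)-\frac{s^2}{2}+\frac{1}{2\mu^2}$, so $H(s)=\frac12 g(s)s-G(s)$ equals the constant $-G(-1/\mu)-\frac{1}{2\mu^2}$ on $(-\infty,-1/\mu]$; on the compact interval $[-1/\mu,0]$ the function $H$ is continuous, hence bounded, and combining the two bounds gives (vi).
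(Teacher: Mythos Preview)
Your argument is correct and, for (i)--(iv) and (vi), follows essentially the same elementary calculus as the paper (your substitution $t=1+\mu s$ is just a tidy repackaging of the paper's direct computation of $g'$). The only point where the two routes diverge slightly is (v): the paper computes $H''(s)=\frac{\mu s}{2(1+\mu s)}\ge 0$ on $[0,\infty)$ and then invokes convexity together with $H(0)=0$ to get, for $0<s\le t$,
\[
H(s)\le \tfrac{s}{t}H(t)+\bigl(1-\tfrac{s}{t}\bigr)H(0)=\tfrac{s}{t}H(t),
\]
a one-line secant inequality. Your approach---showing $s\mapsto H(s)/s$ is nondecreasing via $\Psi'(s)=sH''(s)=\tfrac12 s^2 g''(s)\ge 0$---is correct and rests on the same underlying fact $H''\ge 0$, but the convexity formulation is a shade more direct and worth remembering as a general device whenever one needs $H(s)/s$ monotone for a convex $H$ vanishing at the origin.
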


\begin{proof}
We have $g(0)= 0$ and, for $s
> - 1/\mu $, $g'(s) = ln(1+ \mu s).$ Thus $g'(0) =0$, $g(s)>0$ if $s\not=0$. Now direct calculations show that
 $$
 g(s)\le \ln(1+\mu s)\,s\qquad\mbox{if }\; s\ge 0,
 $$
 and $g(s)\le |s|$ if $s\le 0$. Hence (i), (ii) and (iii) hold. By the
definition of $g$, (iv) clearly holds. Also $H(0) = 0$ and we get,
for $s \geq 0$,
$$H'(s) = \frac{1}{2}[g'(s)s - g(s)] = \frac{1}{2}[s - \frac{1}{\mu}ln (1 + \mu
s)].$$ Thus $ \displaystyle H''(s) = \frac{\mu s}{2(1+ \mu s)}
\geq 0$ for $s \geq 0$. From the convexity of $H$, we deduce
that, if $0 < s \leq t$,
$$H(s) \leq \frac{s}{t}H(t) + \left(1-\frac{s}{t}\right)H(0) = \frac{s}{t}H(t),$$ which
proves (v). Finally, we trivially check that
$$ H(s) = - G(- \frac{1}{\mu}) - \frac{1}{2\mu^2}$$
is constant for $s \leq - 1/\mu$, which implies (vi). The lemma is
proved.
\end{proof}

The next lemma concerns the geometrical structure
of $I$. We are going to denote with
$B(0,\rho)$  the ball in $X$  with center $0$ and radius $\rho$.

\begin{lemma}\label{mp}
Assume  $(H2)$. There exist constants
$\alpha = \alpha (N,|\Omega|,\mu) >0$, $\beta
>0$ and $\rho >0$ such that if $0 < \|c_0\|_{p} \leq \alpha$ then
\begin{itemize}
\item[(i)] $I(v) \geq \beta \, $  for $\, \|u\| =
\rho.$\smallskip
\item[(ii)] $\inf_{v \in B(0, \rho)}I(v) \le 0$, and\ \ $\inf_{v \in B(0, \rho)} I(v) < 0$ if $f\not\equiv0$. \smallskip
\item[(iii)] There exists  $v_0 \in X$ such that $\|v_0\|> \rho$ and
$I(v_0) \leq 0$.
\end{itemize}
\end{lemma}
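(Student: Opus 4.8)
\textbf{Proof proposal for Lemma \ref{mp}.}

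The plan is to prove the three items separately, exploiting the structure of $I(v)=\frac12\|v\|^2-\int_\Omega c_0(x)G(v)\,dx-\int_\Omega f(x)v\,dx$, where $\|\cdot\|$ is the norm $E_{c_0+\mu f^+}$. For item (i), the idea is that near the origin the quadratic term $\frac12\|v\|^2$ dominates. I would split $\int_\Omega c_0 G(v)\,dx$ over $\{v\ge0\}$ and $\{v<0\}$; on $\{v<0\}$ we have $G(v)\le0$ by Lemma \ref{prop-g1}(i), so that contribution helps. On $\{v\ge0\}$, using $g(s)\le C|s|^r$ from Lemma \ref{prop-g1}(ii) for some fixed $r\in(1,2)$, we get $G(s)\le C|s|^{r+1}$, hence $\int_\Omega c_0 G(v^+)\,dx\le C\|c_0\|_p\|v\|^{r+1}$ via H\"older and Sobolev (choosing $r$ close enough to $2$ so that $r+1<2^*$, and using $p>N/2$). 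The term $\int_\Omega f v\,dx$ is bounded by $C\|f\|_p\|v\|$. But this linear term is a problem: it does not vanish faster than $\|v\|^2$ unless we are careful, so the naive estimate $I(v)\ge \frac12\|v\|^2-C\|c_0\|_p\|v\|^{r+1}-C\|f\|_p\|v\|$ gives no positive lower bound on a sphere. The resolution — and this is the subtle point — is to absorb the linear term using the fact that $\|\cdot\|^2=E_{c_0+\mu f^+}$ already incorporates $\mu f^+$; writing $-\int f v = -\int f^+v + \int f^-v$ and completing the square using the quadratic form, one should exploit $(H2)'$ (i.e. $\lambda(c_0+\mu f^+,\Omega)>0$ from Lemma \ref{positivity}) together with the gap $\varepsilon_0$ in \eqref{smallc}. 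Concretely I expect one compares $\frac12\|v\|^2$ against $\mu\int f^+ v^2$ plus a Cauchy--Schwarz handling of the remaining first-order piece, obtaining $I(v)\ge c_1\|v\|^2 - C\|c_0\|_p\|v\|^{r+1} - C\mu\|f^-\|_p\cdot(\text{lower order})$, where the quadratic coefficient $c_1>0$ depends on the spectral gap. Then choosing $\rho>0$ small and $\|c_0\|_p\le\alpha$ small makes the $\|v\|^{r+1}$ term negligible on $\|v\|=\rho$, yielding $I(v)\ge\beta>0$. I think the cleanest route is actually to note that the "essential" part of $I$ near $0$ behaves like the functional in the $c_0=0$ case, whose coercivity-type estimate on small balls is exactly condition \eqref{0.4}/(H2); the $c_0 G(v)$ term is a genuinely higher-order perturbation by Lemma \ref{prop-g1}(ii)--(iii).

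For item (ii), since $I(0)=0$ we immediately get $\inf_{B(0,\rho)}I\le 0$. If $f\not\equiv0$, pick $\phi\in X$ with $\int_\Omega f\phi>0$ (possible: if no such $\phi$ existed then $f\equiv0$); then for small $t>0$, $I(t\phi)=\frac{t^2}{2}\|\phi\|^2 - \int c_0 G(t\phi) - t\int f\phi$. By Lemma \ref{prop-g1}(ii) the middle term is $O(t^{r+1})=o(t)$, and $\int f\phi>0$, so $I(t\phi)<0$ for $t$ small enough, and $t\phi\in B(0,\rho)$ for $t$ small. Hence $\inf_{B(0,\rho)}I<0$.

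For item (iii), the key is the superquadratic growth of $G$ at $+\infty$: $G(s)/s^2\to+\infty$ by Lemma \ref{prop-g1}(iv). Since $c_0\gneqq0$, fix a ball $\omega'\subset\subset\Omega$ and a nonnegative $\psi\in C_0^\infty(\Omega)$ with $\psi>0$ on a set where $c_0$ has positive measure (using $c_0\gneqq0$); then consider $v_0=t\psi$ for $t$ large. We have $I(t\psi)=\frac{t^2}{2}\|\psi\|^2 - \int_\Omega c_0 G(t\psi)\,dx - t\int_\Omega f\psi\,dx$. On the set $\{\psi>0\}\cap\{c_0>0\}$ (positive measure), $c_0(x)G(t\psi(x))\ge c_0(x)\,\tfrac{t^2\psi(x)^2}{M(t)}\cdot$ — more precisely, by Fatou/monotone convergence $t^{-2}\int_\Omega c_0 G(t\psi)\,dx\to+\infty$ as $t\to+\infty$, which dominates the $\frac12\|\psi\|^2$ term; the linear term is $O(t)$ and thus negligible. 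Therefore $I(t\psi)\to-\infty$, and in particular $I(t\psi)\le0$ with $\|t\psi\|>\rho$ for $t$ large; set $v_0=t\psi$.

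The main obstacle is item (i): the linear term $\int f v$ is of order $\|v\|$, not $o(\|v\|^2)$, so one genuinely needs condition (H2) (equivalently the positivity of $\lambda(c_0+\mu f^+,\Omega)$ from Lemma \ref{positivity}, which is why $\|\cdot\|$ was defined via $E_{c_0+\mu f^+}$ rather than $E_{c_0+\mu f}$) to get a positive quadratic lower bound after absorbing the $f^-$ contribution; once that coercivity-on-small-balls estimate is in place, the $c_0G(v)$ term is harmlessly higher order by Lemma \ref{prop-g1}(ii) and smallness of $\|c_0\|_p$.
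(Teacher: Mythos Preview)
Your arguments for (ii) and (iii) are essentially the paper's. The gap is in (i): you correctly flag that the linear term $\int_\Omega f v\,dx$ is of order $\|v\|$, but your proposed resolution --- absorbing it into the quadratic form via the spectral gap and then taking $\rho$ small --- cannot work. Indeed, the very statement of (ii), which you prove, says $I$ takes negative values in \emph{every} ball $B(0,\rho)$ when $f\not\equiv0$, so there is no bound of the form $I(v)\ge c_1\|v\|^2 - o(\|v\|^2)$ near the origin. The norm $\|\cdot\|=E_{c_0+\mu f^+}$ absorbs the \emph{quadratic} piece $\mu\int f^+ v^2$ appearing in $I$, not the \emph{linear} functional $v\mapsto\int f v$; no amount of completing the square removes a genuinely linear term, it only shifts it to a constant.

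The paper's fix goes in the opposite direction. From the straightforward estimate
\[
I(v)\ \ge\ \tfrac12\|v\|^2 - D\|v\| - C\|c_0\|_p\|v\|^{r+1},
\]
with $D$ depending on $\|f\|_{N/2}$ (and $(r+1)p'<2^*$, which forces $r$ close to $1$, not $2$), one first fixes $\rho$ \emph{large} so that $\tfrac12\rho^2 - D\rho\ge\tfrac14\rho^2$, and \emph{then} chooses $\alpha$ small enough that $\|c_0\|_p\le\alpha$ implies $C\|c_0\|_p\,\rho^{r+1}\le\tfrac18\rho^2$. The order of quantifiers is essential: $\rho$ is fixed before $\alpha$, and the superquadratic term is killed by the smallness of $\|c_0\|_p$ for that fixed $\rho$, not by shrinking $\rho$.
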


\begin{proof}
 Let $r>1$, close to 1, satisfy $(r+1)p^\prime < \frac{2N}{N-2}.$
We can choose such $r$ since $p > \frac{N}{2}$. By Lemma~\ref{prop-g1}
we have
\begin{equation}\label{2.116}
|G(s)| \leq  C|s|^{r+1}, \quad
\mbox{ for all } s \in \R.
\end{equation}
Using~\eqref{2.116}, we get, for any $v \in X$,
\begin{equation}\label{2.1161}
       \int_{\Omega}c_0(x) G(v) dx \leq
C\|c_0\|_p \|v\|_{(r+1)p^{'}}^{r+1}\leq C
\|c_0\|_p \|v\|^{r+1},
  \end{equation}
where  we used the H\"older and Sobolev inequalities. Also
\begin{eqnarray*}
\int_{\Omega}f(x) v(x) dx \leq \|f\|_{\frac{N}{2}}\|v\|_{\frac{N}{N-2}} &\leq& D
(\|f^+\|_{\frac{N}{2}}+\|f^-\|_{\frac{N}{2}})\|v\|\\ &\leq& (D/\mu) (C_N+\mu\|f^-\|_{\frac{N}{2}})\|v\|, \end{eqnarray*} for some
$D=D(N,|\Omega|)>0$,  by the hypotheses of Theorem \ref{theorem1}.  We then get, for any $v \in X$, because
of~\eqref{smallc},
\begin{equation}\label{00}
I(v) \geq \frac{1}{2}\|v\|^2 - (D/\mu ) (C_N+\mu\|f^-\|_{\frac{N}{2}})\|v\|- C \|c_0\|_{p} \|v\|^{r+1}.
 \end{equation}  We fix first  $
\rho
>0$ sufficiently large so that if $\|v\| =\rho$
 $$\frac{1}{2}\|v\|^2 -
(D/\mu) (C_N+\mu\|f^-\|_{\frac{N}{2}})\|v\|\ge \frac{1}{4}\rho,$$ and then  $\|c_0\|_{p}$ small enough
to ensure that $I(v) \geq \frac{1}{8}\rho$, for any $v \in X$ with
$\|v\|=\rho$. This proves (i).

Next, note that $I(0)=0$, so $\inf_{v \in B(0, \rho)}I(v) \le 0$.
If $f\not\equiv0$,  take a function  $v \in C_0^{\infty}(\Omega)$,
such that $\int_{\Omega}f(x)v dx >0$ and consider the map $t \to
I(tv)$ for $t>0$. We have
\begin{align}\label{2.118}
       I(tv)  & = \frac{t^2}{2}\|v\|^2 - \int_{\Omega}c_0(x) G(tv)\thinspace
       dx - t \int_{\Omega}f(x)v \thinspace dx \\
       \nonumber
& = t^2 \left[ \frac{1}{2}\|v\|^2 - \int_{\Omega}c_0(x)
\frac{G(tv)}{t^2v^2}v^2  dx - \frac{1}{t}\int_{\Omega}f(x) v
\thinspace dx \right].
   \end{align}
By Lemma~\ref{prop-g1} we have $G(s)/s^2 \to 0$ as $s \to 0$, thus
$$\int_{\Omega}c_0(x) \frac{G(tv)}{t^2v^2}|v|^2 dx \to 0$$
as $t \to 0$, since $v \in C_0^{\infty}(\Omega)$. Then~\eqref{2.118} implies $I(tv) <0$ for
$t
>0$ small enough. This proves (ii).

Finally, to prove (iii) we consider again the map $t \to I(tv)$, $t>0$, and take $v \in C_0^{\infty}(\Omega)$ with  $v\geq 0$, $c_0v
\not\equiv 0$. Then since by Lemma~\ref{prop-g1} $G(s)/s^2 \to +
\infty$ as $s \to + \infty$, we now have
$$ \int_{\Omega}c_0(x)\frac{G(tv)}{t^2v^2}|v|^2 dx   \to +
\infty,$$ so $I(tv) \to - \infty$ as $t \to + \infty$. This of course implies (iii).
\end{proof}

In view of Lemma~\ref{mp} it can be expected that for $\|c_0\|_p$
sufficiently small $I$ has two critical points, one of which is a
local minimum, while the other is of saddle type.

\begin{lemma}\label{local-minima}
Assume  that  $\|c_0\|_p$ is sufficiently small to ensure that
$(H2)$ and Lemma~\ref{mp} hold. Then the functional $I$
possesses a critical point  $v\in B(0,\rho)$, with $I(v)\le 0$,
which is a local minimum of $I$.
\end{lemma}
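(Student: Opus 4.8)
The plan is to obtain $v$ as a global minimizer of $I$ over the \emph{closed} ball $\overline{B(0,\rho)}$, and then to use Lemma~\ref{mp} to show that this minimizer actually lies in the open ball $B(0,\rho)$; an interior minimizer of a $C^1$ functional is a local minimum and hence a critical point, which yields the conclusion.

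The first step is to check that $I$ is sequentially weakly lower semicontinuous on $X$. Write $I(v)=\tfrac12\|v\|^2-\int_\Omega c_0(x)G(v)\,dx-\int_\Omega f(x)v\,dx$, where $\|\cdot\|=E_{c_0+\mu f^+}(\cdot)$. Under $(H2)^\prime$, by Lemma~\ref{positivity} the map $v\mapsto\tfrac12\|v\|^2$ is the square of an equivalent Hilbert norm on $X$, hence convex and continuous, hence weakly lower semicontinuous. The functional $v\mapsto\int_\Omega f v\,dx$ is linear and continuous on $X$ (since $f\in L^p(\Omega)\subset L^{N/2}(\Omega)$ and $X\hookrightarrow L^{2^*}(\Omega)$), hence weakly continuous. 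For the remaining term I would take $r>1$ as in the proof of Lemma~\ref{mp}, so that $q:=(r+1)p^\prime<\tfrac{2N}{N-2}$, and combine the compact Sobolev embedding $X\hookrightarrow L^{q}(\Omega)$ with the growth bound $|G(s)|\le C|s|^{r+1}$ from Lemma~\ref{prop-g1}: the Nemytskii operator $v\mapsto G(v)$ is continuous from $L^{q}(\Omega)$ into $L^{p^\prime}(\Omega)$, so if $v_n\rightharpoonup v$ in $X$ then $v_n\to v$ in $L^{q}(\Omega)$, whence $G(v_n)\to G(v)$ in $L^{p^\prime}(\Omega)$ and, by H\"older's inequality with $c_0\in L^p(\Omega)$, $\int_\Omega c_0(x)G(v_n)\,dx\to\int_\Omega c_0(x)G(v)\,dx$. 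Altogether $I$ is weakly lower semicontinuous.

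Now the closed ball $\overline{B(0,\rho)}$ is bounded, closed and convex in the Hilbert space $X$, hence weakly compact; since $I$ is bounded below on it (each of the three terms is bounded on bounded sets) and weakly lower semicontinuous, it attains its infimum over $\overline{B(0,\rho)}$ at some $v$. By Lemma~\ref{mp}(ii), $I(v)=\inf_{\overline{B(0,\rho)}}I\le\inf_{B(0,\rho)}I\le 0$, while Lemma~\ref{mp}(i) gives $I(w)\ge\beta>0$ whenever $\|w\|=\rho$. Hence $\|v\|<\rho$, i.e.\ $v\in B(0,\rho)$, so there is $\delta>0$ with $B(v,\delta)\subset B(0,\rho)$ and $I(v)\le I(w)$ for all $w\in B(v,\delta)$. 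Thus $v$ is a local minimum of $I$; since $I\in C^1(X,\R)$ this forces $I^\prime(v)=0$, so $v$ is a critical point, and $I(v)\le 0$.

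The only non-routine ingredient is the weak continuity of $v\mapsto\int_\Omega c_0(x)G(v)\,dx$; all the rest is standard variational machinery. That weak continuity rests precisely on the subcritical growth $|G(s)|\le C|s|^{r+1}$ with $(r+1)p^\prime<\tfrac{2N}{N-2}$ — the reason for the particular choice of $r$ in Lemma~\ref{mp} — together with the compactness of $X\hookrightarrow L^{(r+1)p^\prime}(\Omega)$. Note that the non-coercivity of the zero-order term plays no role at this stage, since the quadratic part $-\int(c_0+\mu f^+)v^2$ has already been absorbed into the equivalent norm $\|\cdot\|$ via $(H2)^\prime$.
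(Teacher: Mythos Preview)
Your proof is correct and follows essentially the same direct-method approach as the paper: both establish weak lower semicontinuity of $I$ via weak lower semicontinuity of the norm, weak continuity of the linear term, and strong $L^q$-convergence combined with the growth bound \eqref{2.116} on $G$ to handle $\int_\Omega c_0 G(v)\,dx$, and then minimize over the ball. Your version is slightly more explicit in invoking weak compactness of the closed ball and in using Lemma~\ref{mp}(i) to force the minimizer into the open ball, a point the paper leaves implicit.
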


\begin{proof}
By Lemma~\ref{mp} (i) and (ii) there are $\rho,\beta >0$ such that
$$ m := \inf_{v \in B(0, \rho)}I(v) \le 0 \quad \mbox{ and } \quad I(v) \ge \beta> 0
\quad \mbox{ if } \quad \|v\|= \rho.$$ Let $(v_n) \subset B(0,
\rho) \subset X$ be a sequence such that $I(v_n) \to m.$ Since
$(v_n) \subset X$ is bounded we have, up to a subsequence, $v_n
\rightharpoonup v$ weakly in $X$, for some $v \in X$. Now, by
standard properties of the weak convergence and since $f\in
L^{N/2}(\Omega)\subset X^{-1}$,
$$\|v\|^2 \leq \liminf_{n \to \infty}\|v_n\|^2 \quad \mbox{ and }
\quad \int_{\Omega}f(x)v_n \thinspace dx \to \int_{\Omega}f(x) v
\thinspace dx$$  as $n\to\infty$.Also, since $v_n \to v$ in $L^q(\Omega)$ for $1
\leq q < \frac{2N}{N-2}$ and  $c_0 \in L^p(\Omega)$ we readily
obtain, using~\eqref{2.116}, that
$$\int_{\Omega}c_0(x) G(v_n) dx \to \int_{\Omega}c_0(x) G(v) dx\quad \mbox{ as }n\to\infty.$$
We deduce that $v\in B(0,\rho)$ and
$$I(v) \leq \liminf_{n \to \infty} I(v_n) = m=\inf_{v \in B(0, \rho)}I(v).$$ Thus $v $ is a local minimum of $I$ and, by standard
arguments, a critical point of $I$.
\end{proof}

Now we define the mountain pass level
$$\hat c = \inf_{g \in \Gamma} \max_{t \in [0,1]} I(g(t))$$
where
$$\Gamma = \{ g \in C([0,1], X): g(0) =0, g(1) = v_0\},$$
with $v_0 \in X$ given by Lemma~\ref{mp} (iii). We shall prove
that $I$ possesses a critical point at the mountain pass level,
that is, there exists $v \in X$ such that $I(v) =\hat c$ and $I'(v)=0$. Since
$\hat c>0$ (by Lemma~\ref{mp} (i)), this critical point must be different from the local
minimum given by Lemma~\ref{local-minima}.
\medskip

It is a standard fact that any $C^1$-functional having a mountain pass geometry
admits a Cerami sequence at the mountain pass level (see for instance \cite{Ce,Ek}). In other words, there exists a sequence  $(v_n) \subset X$
such that
$$I(v_n) \to \hat c \qquad \mbox{ and } \qquad (1+ \|v_n\|) I'(v_n) \to 0.$$
If we manage to show that $(v_n) \subset X$ admits a strongly  convergent
subsequence, its limit is the desired critical point. A first
essential step in the proof of this fact is showing that $(v_n)$ is bounded.

\section{Boundedness of the Cerami sequences}\label{cerami}

The following lemma is the key point in the proof of Theorem \ref{theorem1}.

\begin{lemma}\label{bound-cerami} Assume  that  $\|c_0\|_p$ is
sufficiently small to ensure $(H2)$ and Lemma~\ref{mp} hold. Then the Cerami
sequences for $I$ at any level $d \in \R^+$ are bounded.
\end{lemma}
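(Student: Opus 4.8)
The plan is to argue by contradiction: assume $(v_n)\subset X$ is a Cerami sequence at level $d\ge 0$ with $\|v_n\|\to\infty$, set $w_n=v_n/\|v_n\|$, and derive a contradiction by studying the rescaled sequence. Since $\|w_n\|=1$, up to a subsequence $w_n\rightharpoonup w$ in $X$, $w_n\to w$ in $L^q(\Omega)$ for every $q<2N/(N-2)$, and $w_n\to w$ a.e. The key identity comes from combining $I(v_n)\to d$ with $\langle I'(v_n),v_n\rangle\to 0$ (which holds because $(1+\|v_n\|)I'(v_n)\to 0$): this gives
\[
2I(v_n)-\langle I'(v_n),v_n\rangle = \int_\Omega c_0(x)\bigl[g(v_n)v_n-2G(v_n)\bigr]\,dx-\int_\Omega f(x)v_n\,dx = 2\int_\Omega c_0(x)H(v_n)\,dx-\int_\Omega f v_n\,dx,
\]
which is therefore bounded. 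Dividing by $\|v_n\|^2$ and using $I(v_n)/\|v_n\|^2\to 0$, $\langle I'(v_n),v_n\rangle/\|v_n\|^2\to 0$, the term $\int f v_n/\|v_n\|^2\to 0$, so
\[
\frac{1}{\|v_n\|^2}\int_\Omega c_0(x)H(v_n)\,dx\to 0.
\]

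**Main steps.** First I would use the equation $\langle I'(v_n),v_n\rangle\to 0$ in the form
\[
\|v_n\|^2 = \int_\Omega c_0(x)g(v_n)v_n\,dx+\int_\Omega f(x)v_n\,dx,
\]
divide by $\|v_n\|^2$, and note $\int f v_n/\|v_n\|^2\to 0$; hence $\int_\Omega c_0(x)g(v_n)v_n\,dx/\|v_n\|^2\to 1$. Next, split the domain according to the sign of $v_n$ and its size. On $\{v_n\le 0\}$ and on bounded-level sets, $g(v_n)v_n$ grows at most like $|v_n|^r$ with $r<2$ (Lemma~\ref{prop-g1}(ii)), so those contributions, divided by $\|v_n\|^2$, tend to $0$ after using $w_n\to w$ in $L^{r+1}$. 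The positive part where $v_n$ is large is where the logarithmic growth of $g$ enters. The crucial tool is Lemma~\ref{prop-g1}(v): since $H(s)\le (s/t)H(t)$ for $0\le s\le t$, the function $H$ is essentially subquadratic in a controlled way, and one shows that if the set where $v_n$ is large carried a nonnegligible fraction of the mass, then $\int c_0 H(v_n)/\|v_n\|^2$ would be bounded below away from zero, contradicting the displayed limit above. This is the mechanism borrowed from \cite{Je2}: the monotonicity property (v) of $H$ replaces the Ambrosetti--Rabinowitz superquadraticity condition.

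**The main obstacle.** The hard part is handling the region where $v_n$ is large and $c_0>0$, i.e. showing that $w\equiv 0$ is impossible while also ruling out $w\not\equiv 0$. If $w\not\equiv 0$, then on $\{w>0\}$ we have $v_n=w_n\|v_n\|\to+\infty$, and comparing $g(v_n)v_n\sim \mu^{-1}\|v_n\| w_n\ln(\mu\|v_n\|w_n)\cdot\|v_n\|w_n$ against $\|v_n\|^2$ one finds $\int c_0 g(v_n)v_n/\|v_n\|^2\to\infty$ on that set (an $\ln\|v_n\|$ blow-up), contradicting that this ratio tends to $1$ — this uses $c_0 w^2\not\equiv 0$, which needs a separate argument (if $c_0 w^2\equiv0$ a.e. the large-$v_n$ set is $c_0$-null and one goes back to the subquadratic estimate). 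If instead $w\equiv 0$, then $w_n\to 0$ strongly in every $L^q$, $q<2^*$, and I would return to the identity $\|v_n\|^2=\int c_0 g(v_n)v_n+\int f v_n$ and estimate $\int c_0 g(v_n)v_n$ directly via property (v) of $H$ together with $g(s)s\le 2H(s)+2G(s)$ and the slow growth of $G$, to show the right-hand side is $o(\|v_n\|^2)$, again a contradiction. Getting these two cases to close cleanly — in particular making the logarithmic-blowup estimate rigorous via Fatou's lemma on $\{w>0\}$ — is where the real work lies; the rest is Hölder, Sobolev, and the elementary facts collected in Lemma~\ref{prop-g1}.
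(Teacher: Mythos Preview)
Your treatment of the case $c_0 w^+\not\equiv 0$ (your ``$w\not\equiv 0$ with $c_0 w^2\not\equiv 0$'') is essentially the paper's argument: from $\langle I'(v_n),v_n\rangle\to 0$ one gets $\int_\Omega c_0\,\dfrac{g(v_n)}{v_n}w_n^2\,dx\to 1$, while Fatou on $\{c_0 w>0\}$ forces this integral to $+\infty$, since $g(s)/s\to+\infty$ as $s\to+\infty$ and the complementary contributions are bounded. That part is fine.

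The gap is in the complementary case $c_0 w^+\equiv 0$ (which covers both your ``$w\equiv 0$'' and ``$c_0 w^2\equiv 0$'' subcases). Your plan there is to write $g(s)s=2H(s)+2G(s)$, use $\int c_0 H(v_n)/\|v_n\|^2\to 0$, and then invoke ``the slow growth of $G$'' together with property~(v) to conclude $\int c_0 G(v_n)/\|v_n\|^2\to 0$. But this last step does not follow from growth estimates. For large $s$ one has $H(s)\sim \tfrac14 s^2$ while $G(s)\sim \tfrac12 s^2\ln(\mu s)$, so $H(s)/G(s)\to 0$; knowing $\int c_0 H(v_n)/\|v_n\|^2\to 0$ places no useful constraint on $\int c_0 G(v_n)/\|v_n\|^2$. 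Likewise the bound $G(s)\le C|s|^{r+1}$ with $r$ close to $1$ only gives $\int c_0 G(v_n)/\|v_n\|^2\le C\|v_n\|^{r-1}\|w_n^+\mathbf{1}_{\{c_0>0\}}\|_{(r+1)p'}^{r+1}$, an indeterminate product $\infty\cdot 0$; even if $w_n^+\to 0$ in $L^q$ on $\{c_0>0\}$, no rate is available to beat $\|v_n\|^{r-1}$ or $\ln\|v_n\|$. In short, the pointwise inequality in property~(v) by itself, combined with the algebraic identity and the sublinear bound on $g$, does not close this case.

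What the paper actually does here (this is the idea from \cite{Je2} you allude to but do not implement) is to introduce an auxiliary sequence $z_n=t_n v_n$ with $t_n\in[0,1]$ chosen so that $I(z_n)=\max_{t\in[0,1]}I(tv_n)$. One first shows $I(z_n)\to+\infty$: if not, say $I(z_n)\le M$, then testing the point $k_n=\sqrt{4M}\,w_n$ and using $c_0 w^+\equiv 0$ together with $G\le 0$ on $\R^-$ yields $I(k_n)\ge \tfrac32 M$ for large $n$, contradicting maximality. Since then $t_n\in(0,1)$, one has $\langle I'(z_n),z_n\rangle=0$, hence $I(z_n)=\int c_0 H(z_n)-\tfrac12\int f z_n$. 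Combining this with the bounded quantity $I(v_n)-\tfrac12\langle I'(v_n),v_n\rangle=\int c_0 H(v_n)-\tfrac12\int f v_n$, and \emph{now} using property~(v) in the form $H(z_n)\le t_n H(v_n)$ on $\{z_n\ge 0\}$ (together with the boundedness of $H$ on $\R^-$), one derives $\int_{\{v_n<0\}}c_0 H(v_n)\to-\infty$, contradicting the boundedness of $H$ on $\R^-$. Property~(v) is used to compare $H$ at the two scales $z_n$ and $v_n$, not to bound $G$ directly; without the ray-maximizer $z_n$ there is nothing to compare against, and your proposed estimate cannot be completed.
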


\begin{proof}  Let $(v_n)\subset X$ be a Cerami sequence for $I$ at a level $d \in \R^+$.
Assume for contradiction that $\|v_n\|
\rightarrow \infty$ and  set
$$ w_n = \frac{v_n}{\|v_n\|}.$$
Since $(w_n) \subset X$ is bounded we have  $w_n \rightharpoonup
w$ weakly in $X$ and $w_n \rightarrow w$ strongly in $L^q(\Omega),$ for $1 \leq q <
\frac{2N}{N-2}$ (up to a subsequence). We write $w = w^+ - w^-$. We shall distinguish the two cases $c_0w^+ \equiv 0$ and $c_0w^+ \not\equiv 0$, and prove they are both
impossible.

First we assume that $c_0w^+ =0$, and define the
 sequence $(z_n) \subset X$ by $z_n = t_n v_n$ with
$t_n \in [0,1]$ satisfying
\begin{eqnarray}\label{ex0}
I(z_n) = \max_{t \in [0,1]} I(tv_n)
\end{eqnarray}
(if  $t_n$ defined by (\ref{ex0}) is not unique we choose its
smallest possible value). Let us show that
\begin{eqnarray}\label{1000}
\lim_{n \rightarrow \infty}I(z_n) =  + \infty.
\end{eqnarray}
Seeking a contradiction we assume that for some $M < \infty$
\begin{eqnarray}\label{ex2}
\liminf_{n \rightarrow \infty}I(z_n) \leq  M,
\end{eqnarray}
and we define $( k_n) \subset
X$ by
\[ k_n  = \frac{\sqrt{4M}}{\|v_n\|}\, v_n=\sqrt{4M}w_n .\]
Then $k_n \rightharpoonup k:= \sqrt{4M} \,w$ weakly in $X$ and $k_n \to k$ strongly in
$L^q(\Omega)$ for any $1 \leq q < \frac{2N}{N-2}$. Thus, as in the proof of
Lemma~\ref{local-minima}, we have
\begin{equation}\label{1001}
\int_{\Omega}c_0(x) G(k_n) \thinspace  dx \to \int_{\Omega}c_0(x)
G(k) \thinspace dx.
\end{equation}
Now, recall that $G(s) \leq 0$ for $s \leq 0$, see Lemma~\ref{prop-g1}. Since we have assumed $c_0(x) =0$ if $k(x)>0$, we obtain
\begin{equation}\label{1002}
\int_{\Omega}c_0(x) G(k) \thinspace dx \leq 0.
\end{equation}
Also, since $f \in L^{N/2}(\Omega) \subset X^{-1}$
\begin{equation}\label{1003}
        \left| \int_{\Omega}f(x)k_n \thinspace dx \right|   \leq \sqrt{4M} \, \|f\|_{X^{-1}} \|w_n\| \leq  \sqrt{4M} \,
\|f\|_{X^{-1}}.
   \end{equation}
Combining~\eqref{1001},~\eqref{1002} and~\eqref{1003} it follows
that
\begin{align}\label{ex322}
I(k_n) & = 2 M -  \int_{\Omega}c_0(x) G(k_n) \thinspace dx- \int_{\Omega}f(x) k_n \thinspace dx \\
\nonumber & \geq 2 M - \sqrt{4M} \, \|f\|_{X^{-1}} + o(1).
\end{align}
Thus, taking $M>0$ larger if necessary, we can assume that
\begin{equation}\label{ex32}
I(k_n) \geq (3/2) M
\end{equation}  for all sufficiently
large $n \in \N$. Since $k_n$ and $z_n$ lay on the same ray in $X$ for all $n \in \N$,  we see by the definition of $z_n$ that
(\ref{ex32}) contradicts (\ref{ex2}) (note $\sqrt{4M}/\|v_n\|< 1$ since $\|v_n\|\to\infty$). Thus (\ref{1000}) holds.

We remark that $I(v_n)\to d$ and $I(z_n)\to\infty$ imply that $t_n\in (0,1)$. Hence  by the definition of $z_n$ we have  that $<\!I^\prime (z_n),z_n\!>\, = 0$, for all
$n \in \N$. Thus, with $H$ defined as in Lemma \ref{prop-g1},
\begin{align}\label{ex3}
        I(z_n) & = I(z_n) - \frac{1}{2} <\!I^\prime (z_n),z_n\!>  \\ \nonumber
&  =  \int_{\Omega} c_0(x)H(z_n)\thinspace dx -
\frac{1}{2}\int_{\Omega} f(x) z_n \thinspace dx.
   \end{align}
Combining (\ref{1000}) and (\ref{ex3}) we see that
\begin{equation}\label{ex333}
\frac{1}{2}\int_{\Omega}f(x) z_n \thinspace dx = - M(n) +
\int_{\Omega} c_0(x) H(z_n) \thinspace dx
\end{equation}
where $M(n)$ is a quantity such that $M(n) \to + \infty$ as $n\to\infty$. In order to show that $c_0w^+ =0$
does not occur we next prove that~\eqref{ex333}  is
impossible.

Observe that, for $n \in \N$ large
enough,
\begin{align}\label{ex34}
    d+1\ge    I(v_n) & = I(v_n) - \frac{1}{2} <\!I^\prime (v_n),v_n\!> +o(1) \\ \nonumber
&  =  \int_{\Omega} c_0(x)H(v_n)\thinspace dx -
\frac{1}{2}\int_{\Omega} f(x) v_n \thinspace dx + o(1)
   \end{align}
   (note that $<\!I^\prime (v_n),v_n\!>\, \to 0$, since $(v_n) $ is a Cerami sequence).
Thus, for some $D >0$,$$ \int_{\Omega} c_0(x) H(v_n)\thinspace dx
\leq D + \frac{1}{2}\int_{\Omega}f(x) v_n \thinspace dx = D +
\frac{1}{2t_n}\int_{\Omega} f(x) z_n \thinspace dx $$ or
equivalently, using~\eqref{ex333}
\begin{eqnarray}\label{ex37}
 \int_{\Omega} c_0(x) H(v_n) \thinspace dx \leq D -
\frac{M(n)}{t_n}+ \frac{1} {t_n}\int_{\Omega}c_0(x) H(z_n)
\thinspace dx.
\end{eqnarray}
Now we decompose $\Omega$ into $\Omega = \Omega_n^+ \cup \Omega_n^-$ with
$$ \Omega^+_n = \{ x \in \Omega : z_n(x)  \geq 0 \}\quad  \mbox{ and } \quad \Omega
^-_n = \Omega \backslash \Omega^+_n.$$ On $\Omega_n^+$ we have, by
Lemma~\ref{prop-g1} (v) and  $c_0 \geq 0$, that
$$ \int_{\Omega_n^+}c_0(x) H(z_n)\thinspace dx \leq t_n \int_{\Omega_n^+}c_0(x)
H(v_n)\thinspace dx.$$ On $\Omega_n^-$ we have, by
Lemma~\ref{prop-g1} (vi) and $|\Omega| < \infty$, that for some $D
>0$  $$ \int_{\Omega_n^-}c_0(x) H(z_n)\thinspace dx \leq D.$$ Then it follows
from (\ref{ex37})  that
$$
  \int_{\Omega_n^-}c_0(x) H(v_n) \thinspace dx
        \leq
D - \frac{M(n)}{t_n} + \frac{D}{t_n}.
$$
Letting $n \to \infty $ and using  $t_n\in [0,1]$ we see that
$$ \int_{\Omega_n^-}c_0(x) H(v_n)\thinspace dx \to - \infty$$
which is impossible since, by Lemma \ref{prop-g1} (vi), $H$ is bounded on
$\R^-$ and $|\Omega| < \infty$. At this point we have shown that $c_0w^+ = 0 $
is impossible.
\medskip

We now assume that $c_0w^+ \neq 0$ and we show that this property also leads to a
contradiction.  Since $(v_n) \subset X$ is a Cerami sequence we have $<\!I^\prime (v_n),v_n\!>\, \to 0$. Thus
$$ \|v_n\|^2 -  \int_{\Omega}c_0(x) g(v_n) v_n \thinspace dx -
\int_{\Omega} f(x) v_n \thinspace dx \to 0.$$ Dividing by
$\|v_n\|^2$ we get
$$ \|w_n\|^2 -  \int_{\Omega}c_0(x)
\frac{g(v_n)}{\|v_n\|}w_n \thinspace dx \to 0,$$ and since
$\|w_n\| =1$ we have
\begin{eqnarray}\label{ex38}
 \int_{\Omega}c_0(x) \frac{g(v_n)}{\|v_n\|}w_n \thinspace dx =  \int_{\Omega}c_0(x) \frac{g(v_n)}{v_n}w^2_n \thinspace dx \to 1.
\end{eqnarray}
Let $$\Omega^+ = \{x \in \Omega: c_0(x)w(x) > 0 \}\not=\emptyset.$$ We also define
$$ \Omega_n^+ = \{ x \in \Omega : v_n(x)  \geq 0 \} \quad \mbox{ and } \quad \Omega
^-_n = \Omega \backslash \Omega^+_n.$$ Now since $g(s)/s \to + \infty$ as $s
\to + \infty$ and $w_n \to w>0 $ a.e. on $\Omega^+$ it follows that
$$  c_0\frac{g(v_n)}{v_n}w_n^2 \to +
\infty \quad \mbox{a.e. on } \Omega^+. $$  Thus, taking into
account  that $|\Omega^+|
>0$, we deduce that
\begin{equation}\label{ex39}
 \lim_{n \to  \infty} \int_{\Omega^+}c_0(x)
\frac{g(v_n)}{v_n}w^2_n \thinspace dx \to + \infty.
\end{equation}
On the other hand we have
\begin{align}\label{ex40}
        \int_{\Omega^+} c_0(x) \frac{g(v_n)}{v_n}w_n^2 \thinspace dx& = \int_{\Omega} c_0(x) \frac{g(v_n)}{v_n}w_n^2 \thinspace dx  \\ \nonumber
&  - \int_{(\Omega\backslash \Omega^+)\cap \Omega_n^+}c_0(x)
\frac{g(v_n)}{v_n}w_n^2 \thinspace dx \\ \nonumber & -
\int_{(\Omega\backslash \Omega^+)\cap \Omega_n^-}c_0(x)
\frac{g(v_n)}{v_n}w_n^2 \thinspace dx.
   \end{align}
   But, for all $n \in \N$, since $g$ is non negative,
\begin{equation}\label{ex41}
\int_{(\Omega\backslash \Omega^+)\cap \Omega_n^+}
c_0(x)\frac{g(v_n)}{v_n}w_n^2\thinspace dx \geq 0.
\end{equation}
Also, since $g(s)/s$ is bounded for $s \leq 0$ we have, for some $D >0$,
\begin{align}\label{ex42}
\left| \int_{(\Omega\backslash \Omega^+)\cap \Omega_n^-}c_0(x)
\frac{g(v_n)}{v_n}w_n^2 \thinspace dx \right|  & \leq D \int_{\Omega}c_0(x) w_n^2 \thinspace dx \\
\nonumber & \leq D \|c_0\|_{\frac{N}{2}} \|w_n\|^2 \leq D
\|c_0\|_{\frac{N}{2}}.
\end{align}

Now combining~\eqref{ex38}-\eqref{ex42} we get a contradiction. This shows that $c_0w^+ \neq 0$ is impossible and ends the proof of the lemma.
\end{proof}

\begin{lemma}\label{convergence} Under the hypotheses of
Lemma~\ref{bound-cerami} any Cerami sequence for $I$ at a level $d
\in \R^+$ admits a strongly convergent subsequence.
\end{lemma}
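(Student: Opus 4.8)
The plan is to run a standard Palais–Smale / Cerami compactness argument, now that Lemma~\ref{bound-cerami} has supplied boundedness. Let $(v_n)\subset X$ be a Cerami sequence for $I$ at level $d\in\R^+$. By Lemma~\ref{bound-cerami}, $(v_n)$ is bounded in $X$, so up to a subsequence $v_n\rightharpoonup v$ weakly in $X$, $v_n\to v$ strongly in $L^q(\Omega)$ for every $1\le q<\frac{2N}{N-2}$, and $v_n\to v$ a.e. in $\Omega$. The goal is to upgrade weak convergence to strong convergence, i.e. $\|v_n-v\|\to 0$, which then gives $I(v)=d$ and $I'(v)=0$ by continuity.

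The key step is to test the equation with $v_n-v$. Since $(1+\|v_n\|)I'(v_n)\to 0$ and $(v_n)$ is bounded, we have $I'(v_n)\to 0$ in $X^{-1}$, hence $\langle I'(v_n),v_n-v\rangle\to 0$. Writing this out,
\[
\langle I'(v_n),v_n-v\rangle = \langle v_n,v_n-v\rangle - \int_\Omega c_0(x)\,g(v_n)(v_n-v)\,dx - \int_\Omega f(x)(v_n-v)\,dx,
\]
where $\langle\cdot,\cdot\rangle$ is the inner product associated with the norm $\|\cdot\| = E_{c_0+\mu f^+}(\cdot)$. The last term tends to $0$ because $f\in L^{N/2}(\Omega)\subset X^{-1}$ and $v_n\rightharpoonup v$. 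For the middle term, the estimate $|g(s)|\le C|s|^r$ with $r\in(1,2)$ from Lemma~\ref{prop-g1}(ii), together with H\"older's inequality and the fact that $v_n\to v$ strongly in $L^{(r+1)p'}(\Omega)$ (choosing $r$ close to $1$ so that $(r+1)p'<\frac{2N}{N-2}$, exactly as in the proof of Lemma~\ref{mp}), shows $\int_\Omega c_0(x)\,g(v_n)(v_n-v)\,dx\to 0$: indeed $c_0 g(v_n)$ is bounded in $L^{(r+1)p'/((r+1)p'-r)}$ or more simply $\|c_0 g(v_n)\|_{p'}\le C\|c_0\|_p\|v_n\|_{rp'}^r$ is bounded while $v_n-v\to0$ in $L^{p'}$. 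Hence $\langle v_n,v_n-v\rangle\to 0$. On the other hand, weak convergence gives $\langle v,v_n-v\rangle\to 0$, so subtracting yields $\|v_n-v\|^2=\langle v_n-v,v_n-v\rangle\to 0$, which is the desired strong convergence.

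I expect the routine but slightly delicate point to be the treatment of the nonlinear term $\int_\Omega c_0(x)g(v_n)(v_n-v)\,dx$: one must exhibit an exponent triple making H\"older applicable with $v_n-v$ converging to zero in a space where the Sobolev embedding is \emph{compact} (i.e.\ strictly below the critical exponent), and this is precisely why the subcritical growth bound $|g(s)|\le C|s|^r$, $r<2$, of Lemma~\ref{prop-g1}(ii) is needed rather than the mere superlinearity. No genuine obstacle remains, since boundedness — the hard part — was already established in Lemma~\ref{bound-cerami}; once strong convergence is in hand, $I\in C^1(X,\R)$ gives $I(v)=\lim I(v_n)=d$ and $I'(v)=\lim I'(v_n)=0$, completing the proof.
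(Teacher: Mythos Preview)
Your argument is correct and follows a standard Palais--Smale compactness scheme once boundedness is in hand. The paper's own proof reaches the same conclusion by a slightly different but equally standard route: rather than testing $I'(v_n)$ against $v_n-v$, it observes that $I'(v_n)\to 0$ reads as $Lv_n - c_0g(v_n) - f \to 0$ in $X^{-1}$, where $L:X\to X^{-1}$ is the linear operator $(Lu)\phi=\int_\Omega \nabla u\cdot\nabla\phi - (c_0+\mu f)u\phi$; then, using the same subcritical growth of $g$ that you invoke, it shows $c_0 g(v_n)\to c_0 g(v)$ in $X^{-1}$, so $Lv_n$ converges in $X^{-1}$, and the invertibility of $L$ (guaranteed by $(H2)'$) yields strong convergence of $v_n$ directly. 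Your approach avoids appealing to the invertibility of $L$ and instead exploits the Hilbert structure via $\langle v_n-v,v_n-v\rangle = \langle v_n,v_n-v\rangle - \langle v,v_n-v\rangle$; the paper's approach is a touch cleaner in that it identifies the limit in one stroke without splitting into two terms. Both hinge on exactly the same analytic ingredient, namely the bound $|g(s)|\le C|s|^r$ with $(r+1)p'<2^*$, and neither has any advantage in generality here.
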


\begin{proof}
Let $(v_n) \subset X$ be a Cerami sequence for $I$ at a level $d
\in \R^+$. Since by Lemma~\ref{bound-cerami} this sequence is
bounded, by passing to a subsequence  we can
assume that $v_n \rightharpoonup v$ weakly in $X$ and $v_n \to v$ strongly  in $L^q(\Omega)$, for each $1 \leq q < \frac{2N}{N-2}$. The condition
$I'(v_n) \to 0$ in $X^{-1}$ means precisely that
$$ - \Delta v_n - [c_0(x) + \mu f(x)] v_n - c_0(x) g(v_n) - f(x)
\to 0 \quad \mbox{ in } X^{-1}.$$ Because $v_n \to v$ in
$L^q(\Omega)$, for $1 \leq q < \frac{2N}{N-2}$ and $c_0 \in
L^p(\Omega)$ for some $p > \frac{N}{2}$ we readily have that $c_0(x)
g(v_n) \to c_0(x) g(v)$ in $X^{-1}$. Thus
\begin{equation}\label{001}
- \Delta v_n - [c_0(x) + \mu f(x)] v_n \to c_0(x) g(v) + f(x)
\quad \mbox{ in } X^{-1}.
\end{equation}
Now let $L : X \to X^{-1}$ be defined by
$$(Lu)v = \int_{\Omega}\nabla u \nabla v - [c_0(x) + \mu f(x)] uv \thinspace
dx.$$ The operator $L$ is invertible by \eqref{smallc}, so  we can deduce
from~\eqref{001} that $v_n \to L^{-1}[c_0(x)g(v) + f(x)]$ in $X$.
Consequently, by the uniqueness of the limit, $v_n \to v$ in $X$.
\end{proof}

\section{Proofs of the main theorems}\label{nonconstant}

With the results from the previous section at hand, we are ready
to prove  Theorem \ref{theorem1}.  We assume that $\ov{c}>0$ is chosen
sufficiently small to ensure that the conclusions of
Lemmas~\ref{positivity}--\ref{convergence}  hold.
\medskip

 \noindent {\it Proof of Theorem~\ref{theorem1}.} Let first   $\mu>0$.
By Lemma~\ref{local-minima} we have the existence of a first
critical point which is a local minimum of $I$, whereas by
Lemmas~\ref{bound-cerami} and~\ref{convergence} we obtain a second
critical point at the mountain pass level $\hat c >0$. So we obtain two different solutions of ~\eqref{2.11} in $X$. By Lemma~\ref{negative-bound} and Lemma \ref{dual1} they give  two different solutions of~\eqref{problem0}. These solutions are bounded, as a consequence of Lemma \ref{regularity} below.

Next, if $\mu<0$ we replace $u$ by $-u$, which is equivalent to replacing $\mu$ by $-\mu$ and $f$ by $-f$.  Theorem~\ref{theorem1} is proved. \qed
\medskip

Now consider the equation
\begin{equation} \label{moregen}
- \mathrm{div}(A(x) \nabla u) =   \mu <\!\!A(x)\nabla u,\nabla u\!\!>   +c_0(x)u + f(x),
\end{equation}
and assume $\Lambda I\ge A(x)\ge \lambda I$, where $\Lambda\ge \lambda>0$. We
have just proved Theorem \ref{theorem1} for \eqref{moregen} with $A(x)=I$.

It is trivial to check that   the change of unknown $v =
\frac{1}{\mu} (e^{\mu u}-1)$  transforms \eqref{moregen} into
\begin{equation} \label{moregen1}
- \mathrm{div}(A(x) \nabla v) - \left[c_0(x) + \mu f(x)\right] v = {c_0(x)} g(v) +
f(x).
\end{equation}
 This equation is variational and can be treated exactly
like~\eqref{2.11}. Repeating the arguments from the previous sections we are led to the following result.
\begin{theorem} \label{theorem11} Assume that $$c_0\gneqq0\;\mbox{ in } \Omega\quad\mbox{  and }\quad\mu\not=0.$$
If
$$
\|[\mu f]^+\|_{L^{\frac{N}{2}}(\Omega)}
<{\lambda C_N}
$$
and
$$
\max\{\|c_0\|_{L^p(\Omega)}\,,\,\-[\mu f]^-\|_{L^p(\Omega)}\}<
\ov{c}\,,
$$
where $\ov{c}>0$ depends only  on $N$, $p$, $\lambda$, $\Lambda$, $|\Omega|$,
$|\mu|$, $\|[\mu f]^+\|_{L^p(\Omega)}$, then \eqref{moregen}
admits at least  $\mathrm{two}$ $\mathrm{bounded}$ solutions.
\end{theorem}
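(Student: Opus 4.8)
The plan is to follow the reduction already indicated above: turn \eqref{moregen} into the divergence-form equation \eqref{moregen1} by the change of unknown $v=\frac1\mu(e^{\mu u}-1)$, and then re-run Sections \ref{link}--\ref{cerami} with the constant matrix $I$ replaced by $A(x)$. First, since $\langle A(x)\nabla(-u),\nabla(-u)\rangle=\langle A(x)\nabla u,\nabla u\rangle$, replacing $u$ by $-u$ changes $(\mu,f)$ into $(-\mu,-f)$ while leaving $A$ unchanged, so one may assume $\mu>0$. Then \eqref{moregen1} is the Euler--Lagrange equation of
\[
I(v)=\frac12\int_\Omega\langle A(x)\nabla v,\nabla v\rangle\,dx-\frac12\int_\Omega[c_0(x)+\mu f(x)]v^2\,dx-\int_\Omega c_0(x)G(v)\,dx-\int_\Omega f(x)v\,dx,
\]
and I would work throughout with the norm $\|v\|$ given by $\int_\Omega\langle A(x)\nabla v,\nabla v\rangle-[c_0(x)+\mu f^+(x)]v^2\,dx$, the $A$-analogue of the one fixed after Lemma \ref{positivity}.

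Next I would re-examine the ingredients that involve the operator. For the $A$-version of Lemma \ref{positivity} one only needs the ellipticity bound $\int_\Omega\langle A\nabla v,\nabla v\rangle\ge\lambda\|\nabla v\|_2^2$ together with \eqref{2.18}: these give that $\|\cdot\|$ is a norm equivalent to the standard one, that $\lambda(A,c_0+\mu f^+,\Omega)>0$, and that $-\mathrm{div}(A\nabla\cdot)-[c_0+\mu f^+]$ obeys the maximum principle, \emph{provided} $\|c_0+\mu f^+\|_{N/2}<\lambda C_N^{-1}$; by the hypothesis $\|[\mathrm{sgn}(\mu)f]^+\|_{N/2}<\lambda C_N^{-1}$ this holds after shrinking $\ov{c}$ as in \eqref{smallc}, and it is the $A$-version of $(H2)^\prime$. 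The computation proving Lemma \ref{dual1} uses only the identity $\nabla v=(1+\mu v)\nabla u$ and the test function $\psi=\phi/(1+\mu v)$, so it survives carrying $A(x)$ through each integrand, and solutions of \eqref{moregen1} with $v>-1/\mu+\varepsilon$ yield solutions of \eqref{moregen}. Lemma \ref{trudi} is already stated for general divergence-form operators, so the global bound and the proof of Lemma \ref{negative-bound} (on $\{v<0\}$, $c_0 g\ge0$ gives $-\mathrm{div}(A\nabla v)-[c_0+\mu f^+]v\ge-f^-$; then test with $v^-$ and use the norm equivalence) go through with constants that now also depend on $\lambda$ and $\Lambda$, which accounts for the stated dependence of $\ov{c}$. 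Lemma \ref{prop-g1} concerns $g,G,H$ alone and needs nothing.

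Finally, for the variational part: in Lemma \ref{mp} the only change is that the constant $D$ in $\int_\Omega f v\le D\|v\|$ now also depends on $\lambda$ through the norm equivalence, and the three conclusions follow as before. Lemmas \ref{local-minima}, \ref{bound-cerami} and \ref{convergence} use only the abstract structure $I(v)=\frac12\|v\|^2-\int c_0 G(v)-\int f v$ with $\langle I'(v),v\rangle=\|v\|^2-\int c_0 g(v)v-\int f v$, the weak lower semicontinuity of $v\mapsto\int_\Omega\langle A\nabla v,\nabla v\rangle$ (immediate, this functional being convex and continuous on $X$), the compact embeddings $X\hookrightarrow L^q(\Omega)$ for $q<\frac{2N}{N-2}$, and the invertibility of $L:X\to X^{-1}$, $(Lu)v=\int_\Omega\langle A\nabla u,\nabla v\rangle-[c_0+\mu f]uv$, which again follows from Lax--Milgram once $\|c_0+\mu f^+\|_{N/2}<\lambda C_N^{-1}$; none of this uses $A=I$. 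Hence $I$ has a local minimum $v_1\in B(0,\rho)$ with $I(v_1)\le0$ and a mountain-pass critical point $v_2$ with $I(v_2)=\hat c>0$, so $v_1\neq v_2$; by the $A$-version of Lemma \ref{negative-bound}, $v_1,v_2>-1/(2\mu)$ in $\Omega$, so $u_i=\frac1\mu\ln(1+\mu v_i)$, $i=1,2$, are two distinct solutions of \eqref{moregen}, bounded because the $v_i$ are, as in Lemma \ref{regularity}. I do not expect an essential obstacle: the only real work is the bookkeeping of $\lambda$ and $\Lambda$ through the estimates -- in particular choosing $\ov{c}$ small enough that the $A$-version of $(H2)^\prime$, namely $\|c_0+\mu f^+\|_{N/2}<\lambda C_N^{-1}$, holds -- which is exactly why the text merely says the arguments ``repeat''.
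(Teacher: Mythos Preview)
Your proposal is correct and follows exactly the route the paper takes: the paper simply observes that the change of unknown still transforms \eqref{moregen} into the variational equation \eqref{moregen1}, and then states that ``repeating the arguments from the previous sections'' yields the result, with boundedness coming from Lemma \ref{regularity}. Your write-up is a faithful and careful expansion of that one sentence, correctly tracking where $\lambda$ and $\Lambda$ enter (in the $A$-version of $(H2)^\prime$ and in the norm equivalence), and handling $\mu<0$ by the same sign change used in the proof of Theorem \ref{theorem1}.
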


The boundedness of the solutions obtained in this theorem (which contains Theorem \ref{theorem1} as a particular case) is a consequence of the following lemma.
\begin{lemma}\label{regularity}
Assume that $\Lambda I \ge A(x)\ge \lambda I$ for some $\Lambda\ge \lambda>0$, $\mu \in L^{\infty}(\Omega)$, and that $c_0$ and $f$
belong to $L^p(\Omega)$, for some $p > \frac{N}{2}$. Then any
solution $v\in X$ of~\eqref{moregen1} belongs to
$L^\infty(\Omega)$.
\end{lemma}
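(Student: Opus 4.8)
The idea is to read \eqref{moregen1} as a \emph{linear} equation
$-\mathrm{div}(A(x)\nabla v)=c(x)v+f(x)$ whose zero-order coefficient $c$ lies
in $L^{\tilde p}(\Omega)$ for some $\tilde p>N/2$, and then simply to invoke
Lemma~\ref{trudi}. What makes this possible is the \emph{barely superlinear}
growth of $g$ recorded in Lemma~\ref{prop-g1}(ii): $|g(s)|\le C|s|^r$ for every
$r\in(1,2)$. Hence the quotient $g(v)/v$ is controlled by $|v|^{r-1}$ with
$r-1>0$ as small as we wish, which is exactly the margin needed to absorb the
nonlinear term $c_0(x)g(v)$ into a good Lebesgue space after dividing by $v$.

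First I would fix the exponent. Since $p>N/2$ we have $\tfrac1p<\tfrac2N$, so we
may choose $r\in(1,2)$ so close to $1$ that
\[
\frac1p+(r-1)\Big(\frac12-\frac1N\Big)<\frac2N .
\]
Define $\gamma(x):=c_0(x)\,g(v(x))/v(x)$ on the set $\{v\neq0\}$ and
$\gamma(x):=0$ on $\{v=0\}$; this is legitimate because $g(0)=0$, and one has
$\gamma v=c_0\,g(v)$ a.e.\ in $\Omega$. By Lemma~\ref{prop-g1}(ii) (whose
constant may be taken independent of $x$ since $\mu\in L^\infty(\Omega)$) we get
$|g(v)/v|\le C|v|^{r-1}$ a.e., hence $|\gamma|\le C|c_0|\,|v|^{r-1}$. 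Since
$v\in X\subset L^{2^*}(\Omega)$ with $\tfrac1{2^*}=\tfrac12-\tfrac1N$, H\"older's
inequality gives $\big\|\,|v|^{r-1}\big\|_{2^*/(r-1)}=\|v\|_{2^*}^{\,r-1}<\infty$,
so $\gamma\in L^m(\Omega)$ with
$\tfrac1m=\tfrac1p+(r-1)\big(\tfrac12-\tfrac1N\big)<\tfrac2N$, i.e.\ $m>N/2$.

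Now \eqref{moregen1} reads
$-\mathrm{div}(A(x)\nabla v)=\big[c_0(x)+\mu(x)f(x)+\gamma(x)\big]v+f(x)$, an
identity in $X^{-1}$. Put $c:=c_0+\mu f+\gamma$. Using $\mu\in L^\infty(\Omega)$,
$c_0,f\in L^p(\Omega)$, $\gamma\in L^m(\Omega)$ and $|\Omega|<\infty$, we obtain
$c\in L^{\tilde p}(\Omega)$ and $f\in L^{\tilde p}(\Omega)$ with
$\tilde p:=\min\{p,m\}>N/2$. Applying Lemma~\ref{trudi} to both the $\le$ and the
$\ge$ forms of this equation yields
$\sup_\Omega v^+\le C(\|v^+\|_2+\|f\|_{\tilde p})$ and
$\sup_\Omega v^-\le C(\|v^-\|_2+\|f\|_{\tilde p})$, whence $v\in L^\infty(\Omega)$.

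The only step needing care is the exponent bookkeeping behind the choice of $r$,
which rests squarely on the \emph{strict} inequality $p>N/2$ together with
Lemma~\ref{prop-g1}(ii); once $\gamma$ is placed in an $L^m$ with $m>N/2$ the
conclusion is immediate and no iterative bootstrap of integrability is required.
(Alternatively one could avoid dividing by $v$ by noting directly that
$c_0g(v)+f\in L^{q_0}(\Omega)$ with $\tfrac1{q_0}=\tfrac1p+\tfrac r{2^*}$ and
iterating a Stampacchia/De Giorgi gain of integrability, but the argument above
is shorter.)
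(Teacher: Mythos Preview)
Your proof is correct and follows essentially the same route as the paper: rewrite \eqref{moregen1} as $-\mathrm{div}(A(x)\nabla v)=[c_0+\mu f+c_0\,g(v)/v]\,v+f$, use the slow growth $|g(s)/s|\le C|s|^{r-1}$ from Lemma~\ref{prop-g1}(ii) together with $v\in L^{2^*}$ and H\"older to place the new zero-order coefficient in $L^{\tilde p}$ with $\tilde p>N/2$, and conclude via Lemma~\ref{trudi}. Your exponent condition $\frac1p+(r-1)\big(\frac12-\frac1N\big)<\frac2N$ is exactly the paper's requirement $r-1<\frac{4p-2N}{p(N-2)}$, and your extra care about $\mu\in L^\infty(\Omega)$ and the definition of $\gamma$ on $\{v=0\}$ is welcome.
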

\begin{proof}
Let $v \in X$ be a solution of~\eqref{moregen1}, which we recast as
$$ - \mathrm{div}(A(x) \nabla v) = [c_0(x) + \mu f(x) + c_0(x) \frac{g(v)}{v}] v +  f(x). $$
By our assumptions $c_0$, $f$, and $\mu f$ belong to
$L^{p}(\Omega)$, for some $p>N/2$. We will be in position to apply Lemma \ref{trudi} provided we show that the term $c_0(x) \frac{g(v)}{v}$ has the same property.
 This is indeed the case
because of the slow growth of $g(s)/s$ as $|s| \to \infty$ (recall Lemma~\ref{prop-g1}). Specifically, for any $r\in (0,1)$ there
exists a $D >0$ such that
$$\left|\frac{g(s)}{s}\right| \leq D |s|^r, \quad \mbox{ for any } s \in \R.$$
Thus, since $c_0 \in L^p(\Omega)$ for some $p > \frac{N}{2}$, and $v$ is in some Lebesgue space ($v\in L^{2N/(N-2)}(\Omega)$), by taking
$r >0$ sufficiently small ($r <
\frac{4p-2N}{p(N-2)}$) and by using the H\"older inequality we see  that  $c_0
g(v)/v \in L^{p_1}(\Omega)$, for some $p_1\in (N/2,p)$.

So $v$ is bounded, by Lemma \ref{trudi}.
\end{proof}

We are now ready to prove Theorem \ref{theoexist}. The idea is to use Theorem~\ref{theorem11} in order to obtain a supersolution and a subsolution to \eqref{Generalf} which can be proved to be ordered. Then we can obtain the existence of
one solution to \eqref{Generalf} by  appealing to a theorem which states the existence of a
solution between ordered sub- and super-solutions. Such results
abound in the theory of elliptic PDE, see for instance
\cite{AmCr}, \cite{DeHe}, and the references in these works. We
are going to use Theorem 3.1 from \cite{BoMuPu4}, which is
particularly adapted to our setting.

We recall, see Definition 3.1 of \cite{BoMuPu4}, that a function $\underline{w}
\in W^{1,2}(\Omega)\cap L^\infty(\Omega)$ is a subsolution of~\eqref{Generalf} if
$$- \mathrm{div}(A(x) \nabla  \underline{w}) \leq H(x, \underline{w}(x), \nabla \underline{w} (x))\quad  \mbox{ in }  \Omega
\quad \mbox { and } \quad \underline{w} \leq 0 \, \mbox{ on } \, \partial \Omega.$$ Respectively, a
function $\ov{w} \in W^{1,2}(\Omega)\cap L^\infty(\Omega)$ is a supersolution of~\eqref{Generalf} if
$$ - \mathrm{div}(A(x) \nabla  \ov{w})\geq H(x, \ov{w}(x), \nabla \ov{w} (x))\quad  \mbox{ in } \, \Omega
\quad \mbox { and } \quad \ov{w} \geq 0 \, \mbox{ on } \, \partial
\Omega.$$ The function $H$ obviously satisfies the hypothesis
(1.5) from \cite{BoMuPu4}, since
$$
|H(x,s,\xi)|\le (1+\mu +|s|)(|c_0(x)|+|f(x)|+|\xi|^2).
$$

\noindent {\it Proof of Theorem~\ref{theoexist}.}
Observe that any solution $\ov{u}\in X$ of the equation
  \begin{equation} \label{more}
- \mathrm{div}(A(x) \nabla u) =   \frac{\mu}{\lambda} <\!\!A(x)\nabla u,\nabla u\!\!>   +c_0(x)u + f(x),
\end{equation}
given by applying Theorem \ref{theorem11}  to \eqref{more}
 in the particular case  $\mu>0$ and $f\ge 0$, is such that $\ov{u}$ is a  {\it supersolution} to the original equation \eqref{Generalf}, thanks to (H1). In addition $\ov{u}\ge 0$, by the maximum principle (Lemma \ref{positivity}) which can be applied to \eqref{moregen1}.  Similarly, it is easily checked that $\underline{u}= -\ov{u}$ is a subsolution to \eqref{Generalf}, and of course $\underline{u}\le 0\le \overline{u}$, so $\underline{u}$, $\ov{u}$  are ordered. These functions are bounded, by Lemma~\ref{regularity}. Thus
 Theorem 3.1 of \cite{BoMuPu4} implies  Theorem~\ref{theoexist}.
\qed

\section{Final Remarks}\label{final}

The
hypotheses we made on \eqref{0.2} in order to prove Theorem
\ref{theorem1} can be generalized in various ways. For instance,
if $f \geq 0$ we see that any solution
$v$ of~\eqref{2.11} satisfies $v \geq 0$ on $\Omega$, provided
$\|c_0+\mu f\|_{N/2}<C_N$ (by Lemma \ref{positivity}). Then  we do not need any more
Lemma~\ref{negative-bound} and, inspecting the proofs of the
remaining lemmas, one can see that requiring that $c_0$ belongs to
$L^p(\Omega)$ for some $ p> \frac{N}{2}$ and that $f \in
L^{\frac{N}{2}}(\Omega)$ suffices to get the conclusion of
Theorem~\ref{theorem1}. In this case the solutions
of~\eqref{2.11} and thus of~\eqref{problem0} which we obtain are not necessarily
bounded. One may in general ask whether it is possible to consider
coefficients $c_0$ and $f$ which are less regular, thus obtaining
solutions with lower regularity, like for instance in
\cite{GrMuPo}.

Let us also make some remarks on the importance of the change of variables
$u=\frac{1}{\mu}\ln(1+\mu v)$ which we used. If the operators div$(a)$ and $B$
in \eqref{0.2} can be appropriately bounded above and below by quantities such
that this change can be made in the corresponding ``extremal" equations,
leading to new equations for which our critical point method can be applied,
then we obtain a subsolution and a supersolution for the initial problem, and hence
a solution to this problem. This approach is in some sense alternative, as well as complementary,  to the one used in many previous papers on coercive problems with natural growth. In these papers the idea was to mimic the change of variables in the initial problem, by testing the weak formulation of \eqref{0.2} with suitably chosen functions, which somehow take account of the change of unknown (see for instance Remark 2.10 in \cite{FeMu2} for more details).

We stress  that, in contrast with the existence result in Theorem \ref{theoexist} which is very general with respect to the structure of the equation, our multiplicity result, Theorem \ref{theorem11}, depends on a strict link between the second order term and the gradient term. In other words, to obtain multiplicity we do need to
be able to make the change of variables in the initial equation. It is certainly a very interesting open problem whether a multiplicity result can be proved for more general non-coercive problems with natural growth. It can be expected that topological methods, in particular index theory, should permit us to deduce multiplicity of solutions for equations which do not have an equivalent formulation in terms of critical points of a functional. For instance, we state

\begin{open} Under appropriate smallness condition on $c_0$ and $f$ is it true that the equation
$$
-\Delta u = c_0(x)u + \mu(x) |\nabla u|^2 + f(x), \qquad  u \in
X,
$$
has at least two bounded solutions, provided $0<\mu_1\le \mu(x)\le \mu_2$ and $\mu$ is not constant ? \end{open}

\noindent{\it Acknowledgement}. We thank M.-F. Bidaut-V\'eron for a number of very useful remarks, which improved our presentation.
\bigskip


\end{document}